\DeclareFontFamily{U}{mathx}{\hyphenchar\font45}
\DeclareFontShape{U}{mathx}{m}{n}{
      <5> <6> <7> <8> <9> <10>
      <10.95> <12> <14.4> <17.28> <20.74> <24.88>
      mathx10
      }{}
\DeclareSymbolFont{mathx}{U}{mathx}{m}{n}
\DeclareMathAccent{\widecheck}{0}{mathx}{"71}
\DeclareMathAccent{\wideparen}{0}{mathx}{"75}
\newcommand{\cev}[1]{\reflectbox{\ensuremath{\vec{\reflectbox{\ensuremath{#1}}}}}}
\newcommand{\sder}[1]               {{\begin{center}
                                      \begin{tabular}{llr}
                                        #1
                                      \end{tabular}
                                      \end{center}}}
\newcommand{\LLll}                                               {{\cL'' \,{{\sqcup_{\tau}}}\,\cL'}}
\newcommand{\PLJ}                                               {{\PL \,{{\sqcup_{\tau}}}\,\It}}
\newcommand{\JS}                                               {{\It \,{{\sqcup_{\tau}}}\,\Sfour}}
\newcommand{\CJ}                                               {{\PL \,{{\sqcup_{\tau}}}\,\J_3}}
\newcommand{\cL}                {\mathcal{L}}
\newcommand{\It}                	{{\mathsf{J}}}
\newcommand{\PL}                	{{\mathsf{CL}}}
\newcommand{\Sfour}                {{\mathsf{S4}}}
\newcommand{\GPLJ}             {{\text{G}_{\PLJ}}}
\newcommand{\GJS}             {{\text{G}_{\JS}}}
\newcommand{\GLLll}             {{\text{G}_{\LLll}}}
\newcommand{\GLl}             {{\text{G}_{\cL'}}}
\newcommand{\J}                    {{\mathsf{{J}}}}
\newcommand{\qurtains}             {\hfill{QED}}
\newcommand{\curtains}             {\hfill{$\vartriangle$}}
\newtheorem{definition}{\vspace{1mm}Definition}[section]
\newtheorem{example}[definition]{\vspace{1mm}Example}
\newtheorem{corollary}[definition]{\vspace{1mm}Corollary}
\newtheorem{prop}[definition]{\vspace{1mm}Proposition}
\newenvironment{proof}{\begin{trivlist}\item{\bf Proof:}}{\qurtains\end{trivlist}}
\newcommand{\pnats}                 {{\mathbb{N}^+}}
\newcommand{\nats}                  {{\mathbb N}}
\newcommand{\proj}                  {{\textsf{p}}}
\newcommand{\from}                  {{{\leftarrow}}}
\newcommand{\lnec}                  {\Box}
\newcommand{\lneg}                  {\mathop{\neg}}
\newcommand{\limp}                  {\mathbin{\supset}}
\newcommand{\lconj}                 {\mathbin{\wedge}}
\newcommand{\ldisj}                 {\mathbin{\vee}}
\newcommand{\sat}                    {\Vdash}
\newcommand{\ent}                    {\vDash}
\newcommand{\der}                    {\vdash}
\newcommand {\Ax}                   {\text{Ax}}
\newcommand {\cA}                   {\mathcal{A}}
\newcommand {\cM}                   {\mathcal{M}}
\newcommand{\lposs}                {{\Diamond}}
\begin{document}

\title{From translations to non-collapsing logic combinations}
\author{João Rasga  \ \ Cristina Sernadas \\[1mm]
{\small Dep. Matemática, Instituto Superior Técnico, Universidade de Lisboa, Portugal}\\[-1mm] 
{\small Instituto de Telecomunicações}\\[-1mm]
{\small \{joao.rasga,cristina.sernadas\}@tecnico.ulisboa.pt}} 
\date{}
\maketitle
\begin{abstract} 
Prawitz suggested expanding a natural deduction system for intuitionistic logic to include rules for classical logic constructors, allowing both intuitionistic and classical elements to coexist without losing their inherent characteristics. Looking at the added rules from the point of view of the G\"odel-Gentzen translation, led us to  propose a general method for the  coexistent combination of two logics when a conservative translation exists from one logic (the \emph{source}) to another (the \emph{host}). Then we prove that the combined logic is a conservative extension of the original logics, thereby preserving the unique characteristics of each component logic. In this way there is no collapse of one logic into the other in the combination.
We also demonstrate that a Gentzen calculus for the combined logic can be induced from a Gentzen calculus for the host logic by considering the translation. This approach applies to semantics as well. We then establish a general sufficient condition for ensuring that the combined logic is both sound and complete. We apply these principles by combining classical and intuitionistic logics capitalizing on  the G\"odel-Gentzen conservative translation, intuitionistic and $\Sfour$ modal logics relying on the G\"odel-McKinsey-Tarski conservative translation, and classical  and 
 Ja\'skowski's paraconsistent logics taking into account the existence of a conservative translation.\\[2mm]
\noindent
{\bf Keywords}: Non-collapsing combination of logics, conservative translation, conservativeness of the combination, Gentzen calculus \\[2mm]
{\bf AMS MSC2020}: 03B22, 03B62, 03B20, 03B45, 03B53.
\end{abstract}


\section{Introduction}\label{sec:introduction}

The question of multiple logics coexisting has been addressed by numerous scholars. For example, Quine argues in Chapter 6 of his work~\cite{qui:70} that there is no actual conflict between two logics, even when they contain constructors with the same name but differing properties. According to Quine, these constructors are essentially referring to different entities. This viewpoint is also held by other academics, such as Dummett~\cite{dum:91}, who argues that any apparent contradictions between the principles of different logics stem from the distinct interpretations assigned to their constructors.

Several methods have been proposed for combining logics. The initial, widely recognized method for combination is known as fusion. This approach integrates modal operators within the structure of classical propositional logic, as elaborated in the works of~\cite{tho:80,wol:91}. Fibring represents a broader strategy for merging logical systems by joining their language constructors and inferential rules, and by choosing a suitable class of models, a process elaborated upon in~\cite{gab:96}.  Additionally, categorical approaches to logic combination, such as institutions, $\pi$-institutions, and general systems, have been discussed in a range of studies~\cite{gog:84,acs:87,mes:89,gog:92,vou:05,dia:08}.

Some of the combination mechanisms previously mentioned were unsuccessful in achieving the intended coexistence of logics while preserving their intrinsic properties. This issue was initially pointed out by Popper~\cite{pop:48} and subsequently in~\cite{gab:96}, who observed that when the rules of classical and intuitionistic logic are merged, intuitionistic logic is collapsed into classical propositional logic. Various solutions have been proposed to address this issue~\cite{cer:96,cal:07,pra:15}. A broader approach to resolving the problems associated with the collapsing of logics during fibring was developed in~\cite{css:jfr:wc:02}.

In~\cite{pra:15} a novel proposal for the coexistence of intuitionistic and classical logic was introduced.
Therein classical logic was embedded in intuitionistic logic by identifying the common constructors
while keeping the other classical constructors and providing specific rules for them. 
A comprehensive examination of this proposal is detailed in~\cite{per:17,pim:21} where a Gentzen calculus, under the name \emph{Ecumenical sequent calculus system}, was given and a cut elimination theorem was stated. Moreover, the semantics was provided and  soundness and completeness were analyzed. The concept of coexistence was further expanded to the coexistence of intuitionistic and $\Sfour$ modal logics in~\cite{jfr:css:24}.~This work includes a proof of no collapse in the combination of these two logics by showing the conservativeness of the combination.  

Upon examining these works in detail, we recognized the significance of the Gentzen-G\"odel translation and the G\"odel-McKinsey-Tarski translation in shaping the logics that emerge from these combinations. Herein, we generalize  this approach and introduce a universal technique for combining two logics linked by a conservative translation, ensuring that they coexist in the combination without collapsing into one another.

We present a Gentzen calculus designed for the coexistent combination of two distinct generic logics connected through a  translation. This calculus encompasses the axioms and rules of the host logic, that is, the target logic of the  translation, and additionally introduces rules that delineate the representation of the source logic's constructors guided by  this  translation. Subsequently, we introduce a semantics for this combination  and demonstrate that the resulting logic is a conservative extension of the individual component logics. In this way both logics coexist in the combination without collapsing. Then, we establish the soundness and completeness of the  combined logic under mild conditions thus showing that the calculus and the proposed semantics are in concordance.

Throughout the paper we present various examples. Specifically, we explore the combination of classical and intuitionistic logics via the Gentzen-G\"odel conservative translation, the merger of intuitionistic and $\Sfour$ modal logics through the G\"odel-McKinsey-Tarski conservative translation, and the join of classical logic with Ja\'skowski's paraconsistent logic.

The structure of the paper is organized as follows: Section~\ref{sec:trans} introduces the concepts of signatures, languages, and the translation of symbols between different logical systems. It also presents the concept of algebras of maps, accompanied by a variety of examples. Section~\ref{sec:combconstrans} is dedicated to defining the  coexistent combination of logics through translations, specifically within the framework of Gentzen calculi. This section begins with a review of the fundamental concepts of sequent calculi before detailing how the Gentzen calculus for a coexistent combined logic is formulated by augmenting the host logic's Gentzen calculus with additional rules that reflect the translation of the non common constructors of the source logic. Throughout, we present several examples of such combined logics.
Further analysis of coexistence is conducted in Section~\ref{sec:constrans}. This section focuses on conservative translations at the semantic level and provides multiple examples to illustrate this notion. Leveraging the conservative nature of the translation, we demonstrate that the  coexistent combined logic serves as a conservative extension of the individual component logics leading to the non-collapsing feature. 
Section~\ref{sec:soundcomp} begins with a review of semantic concepts pertinent to sequents. Subsequently, we establish the soundness and completeness of the logic resulting from the coexistent combination, contingent upon mild conditions over the host logic.~The paper culminates with a summary of the principal findings and provides an outlook on future research directions that merit exploration.

\section{Symbolic translations} \label{sec:trans}

The idea of translating one logic to another has some tradition in logic (see~\cite{god:86,kin:48}). 
In some cases properties of the target logic can be transferred into the source logic in the presence of a translation (see~\cite{ott:00,bla:06,jfr:css:wc:21,jfr:css:23}). 
 
 In this section we define translations at the level of symbols and then formulas. We start by introducing the relevant syntactic aspects of a logic.
  
A \emph{signature} for a logic $\cL$ is a family $C_\cL=\{C_{\cL\, n}\}_{n \in \nats}$ where each $C_{\cL\, n}$ is the set of constructors of arity $n$.  
The \emph{language} or \emph{set of formulas} of a logic $\cL$ with signature $C_\cL$ denoted by $F_\cL$ is inductively defined from $\{C_{\cL\, n}\}_{n \in \pnats}$ over $C_{\cL\, 0}$ and a denumerable set $P_\cL$ of \emph{propositional symbols}. In the sequel we only present the non-empty sets of constructors of a signature. Moreover, we denote by $C_\cL \cup P_\cL$ the enrichment of $C_\cL$ 
with the propositional symbols in $P_\cL$ as $0$-ary constructors. 

\begin{example} \em \label{ex:CJ}
The intuitionistic (propositional) logic $\It$  has the following sets of constructors $C_{\It \,0}=\{\bot^\It\}$, $C_{\It \,1}=\{\lneg^\It\}$ and $C_{\It \,2}=\{\lconj^\It,\ldisj^\It,\limp^\It\}$. 
\curtains
\end{example}

The translation of a constructor in the source logic can be a complex expression involving
several constructors in the target logic. In order to present such complex expressions we need a richer language namely with  projections and aggregations besides composition. We now present such operations in a general context.  

A $(n,i)$-\emph{projection} over a set $A$ is a map 
$$\proj^n_i: A^n \to A$$ such that $\proj^n_i(a_1,\dots,a_n)=a_i$ for $1 \leq i\leq n$. Observe that 
$\proj^1_1$ is the identity map over $A$. 
Moreover, given sets $A$, $B_1, \dots, B_n$ and maps $f_1: A \to B_1, \dots, f_n: A \to B_n$, 
the \emph{aggregation} of $f_1,\dots,f_n$ is the map $$\langle f_1,\dots,f_n\rangle: A \to B_1 \times \dots \times B_n$$ such that 
$\langle f_1,\dots,f_n\rangle(a)=(f_1(a),\dots f_n(a))$. 

We may look at each constructor $c \in C_{\cL\,n}$ as the map
$$c^\bullet: F_\cL^n \to F_\cL$$ with $n$ arguments 
such that $c^\bullet(\varphi_1,\dots,\varphi_n)=c(\varphi_1,\dots,\varphi_n)$ for $n \geq 1$ and $c^\bullet=c$ when $c \in C_{\cL\,0}$. Moreover,
a propositional symbol $p$ can be seen as the map $p^\bullet: \, \to F_\cL$ with no arguments such that $p^\bullet=p$. We denote by 
$$C^\bullet_{\cL\,n}$$ the set $\{c^\bullet: c \in C_{\cL\,n}\}$ and by $P^\bullet_\cL$ the set
$\{p^\bullet: p \in P_\cL\}$.
So we define  $$\cA_\cL$$ as the algebra of maps generated by $C^\bullet_{\cL\,n}$ for $n \geq 1$ over $C^\bullet_{\cL\,0} \cup P^\bullet_\cL$ closed by composition, aggregation and projection.

\begin{example} \em Consider Example~\ref{ex:CJ}. 
Observe that  
$${\lconj^\It}^\bullet \circ \langle {{\lneg}^\It}^\bullet \circ \proj^2_1,\proj^2_2 \rangle \in \cA_{\It}$$
is such that
${\lconj^\It}^\bullet \circ \langle  {{\lneg}^\It}^\bullet \circ \proj^2_1,\proj^2_2 \rangle(\varphi_1,\varphi_2)=
({\lneg}^\It \varphi_1) \lconj^\It \varphi_2$. Moreover 
$${\lconj^\It}^\bullet \circ \langle  {{\lneg}^\It}^\bullet,\proj^1_1 \rangle \in \cA_{\It}$$
is such that 
${\lconj^\It}^\bullet \circ \langle  {{\lneg}^\It}^\bullet,\proj^1_1 \rangle(\varphi)=
(\lneg^\It \varphi) \lconj^\It \varphi$.\curtains
\end{example}

A \emph{constructor translation} from a logic $\cL''$ to a logic $\cL'$ 
is a map 
$$\hat \tau_{\cL'' \to \cL'}: C_{\cL''} \cup P_{\cL''} \to \cA_{\cL'}$$ such
that 

\begin{itemize}

\item $\hat \tau_{\cL'' \to \cL'}$ is injective

\item $\hat \tau_{\cL'' \to \cL'}(c'')$ is a map with $n$ arguments  for every $c'' \in C_{\cL''\,n}$

\item $\hat \tau_{\cL'' \to \cL'}(p'')$ is a map with no arguments for every $p'' \in P_{\cL''}$

\item $\hat \tau_{\cL'' \to \cL'}(c'')={c'}^\bullet$ for every $c'' \in C_{\cL''\,0}$ and for some $c' \in C_{\cL'\,0}$ 

\item $p'$ occurs in $\hat\tau_{\cL'' \to \cL'}(p'')$ and 
$\hat \tau_{\cL'' \to \cL'}(q'') = [\hat\tau_{\cL'' \to \cL'}(p'')]^{p'}_{q'}$ for every  $p'',q'' \in P_{\cL''}$ and for some $p',q' \in P_{\cL'}$\\[1mm]
 where
$[\hat\tau_{\cL'' \to \cL'}(p'')]^{p'}_{q'}$ is obtained from
$\hat\tau_{\cL'' \to \cL'}(p'')$ by replacing $p'$ by $q'$. 

\end{itemize} 

\begin{example} \em \label{ex:PLJtauhatC}
Consider Example~\ref{ex:CJ} for the signature of logic $\It$. Herein we consider classical (propositional)  logic $\PL$ endowed with the following sets of constructors $C_{\PL\,0}=\{\bot^\PL\}$, $C_{\PL\,1}=\{{\lneg}^\PL\}$ and $C_{\PL\,2}=\{\lconj^\PL,\ldisj^\PL,\limp^\PL\}$.
The constructor translation $\hat {\tau}_{\PL \to \It}$ from $\PL$ to $\It$ is defined as follows:
$$\begin{array}{ccc}
 p^\PL & \mapsto & {{\lneg}^\It}^\bullet \circ {{\lneg}^\It}^\bullet \circ {{p}^\It}^\bullet\\[1mm]
\bot^\PL & \mapsto &  {\bot^\It}^\bullet\\[1mm]
{\lneg}^\PL & \mapsto & {{\lneg}^\It}^\bullet \\[1mm]
 \lconj^\PL & \mapsto & {\lconj^\It}^\bullet \\[1mm]
 \ldisj^\PL & \mapsto & {{\lneg}^\It}^\bullet \circ {\lconj^\It}^\bullet \circ \langle {{\lneg}^\It}^\bullet, {{\lneg}^\It}^\bullet \rangle  \\[2mm]
 \limp^\PL & \mapsto & {{\lneg}^\It}^\bullet \circ {\lconj^\It}^\bullet \circ \langle \proj^2_1, {{\lneg}^\It}^\bullet \rangle

\end{array}$$
As we shall see below this translation corresponds to the well known \emph{G\"odel translation} 
(see~\cite{god:86}).\curtains
\end{example}

A constructor translation induces a \emph{formula translation} map $$\tau _{\cL'' \to \cL'}: F_{\cL''} \to F_{\cL'}$$ inductively defined as follows

\begin{itemize}

\item $\tau _{\cL'' \to \cL'}(c'')=\hat \tau _{\cL'' \to \cL'}(c'')$ for every $c'' \in C_{\cL''\,0}$

\item $\tau _{\cL'' \to \cL'}(p'')=\hat \tau_{\cL'' \to \cL'}(p'')$ for every $p'' \in P_{\cL''}$

\item $\tau _{\cL'' \to \cL'}(c''(\varphi''_1,\dots,\varphi''_n))=\hat \tau_{\cL'' \to \cL'}(c'')(\tau _{\cL'' \to \cL'}(\varphi''_1),\dots, \tau _{\cL'' \to \cL'}(\varphi''_n))$ for every $c'' \in C_{\cL''\, n}$.

\end{itemize}

The next result follows immediately  since $\hat \tau_{\cL'' \to \cL'}$ is injective. 

\begin{prop} \em \label{prop:tauinj}
The map $\tau _{\cL'' \to \cL'}$ is injective.
\end{prop}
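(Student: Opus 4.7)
The plan is to prove the injectivity of $\tau_{\cL'' \to \cL'}$ by structural induction on formulas of $F_{\cL''}$, leveraging both the injectivity of $\hat\tau_{\cL'' \to \cL'}$ and the fact that $F_{\cL'}$ is freely generated over $C_{\cL'\,0} \cup P_{\cL'}$ by the positive-arity constructors. First I would observe that the three kinds of atomic $F_{\cL''}$-expressions produce syntactically distinguishable images: for $c'' \in C_{\cL''\,0}$ one has $\tau(c'') = \hat\tau(c'') = {c'}^\bullet$, a single $0$-ary $\cL'$-constructor and hence a formula with no propositional symbol; for $p'' \in P_{\cL''}$ the translation $\tau(p'') = \hat\tau(p'')$ is a formula in which, by definition of a constructor translation, the propositional symbol $p' \in P_{\cL'}$ occurs; and for a compound $c''(\psi_1, \dots, \psi_n)$ with $n \geq 1$, the image $\hat\tau(c'')(\tau(\psi_1), \dots, \tau(\psi_n))$ has an outer pattern dictated by a nontrivial element of $\cA_{\cL'}$ that necessarily involves at least one $\cL'$-constructor of positive arity. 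Hence the outermost syntactic shape of $\tau(\varphi''_1)$ already reveals which of the three cases $\varphi''_1$ falls into.

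For the base cases, if $\tau(\varphi''_1) = \tau(\varphi''_2)$ with both sides either $0$-ary constructors or both propositional symbols, then $\hat\tau(\varphi''_1) = \hat\tau(\varphi''_2)$ as elements of $\cA_{\cL'}$, and injectivity of $\hat\tau$ yields $\varphi''_1 = \varphi''_2$.

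For the inductive step, suppose $\varphi''_1 = c''_1(\psi_1, \dots, \psi_n)$ with $n \geq 1$. By the shape analysis above, $\varphi''_2$ is forced to have the compound form $c''_2(\psi'_1, \dots, \psi'_m)$ with $m \geq 1$, and then
\[
\hat\tau(c''_1)(\tau(\psi_1), \dots, \tau(\psi_n)) = \hat\tau(c''_2)(\tau(\psi'_1), \dots, \tau(\psi'_m)).
\]
Viewing $\hat\tau(c''_i) \in \cA_{\cL'}$ as a term operation with explicit numbered argument slots over the free algebra $F_{\cL'}$, I would read off the common output formula to recover both the template and the filled-in arguments; this yields $\hat\tau(c''_1) = \hat\tau(c''_2)$ in $\cA_{\cL'}$ together with $n = m$ and $\tau(\psi_i) = \tau(\psi'_i)$ for every $i$. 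Injectivity of $\hat\tau$ gives $c''_1 = c''_2$, and since each $\psi_i$ is structurally smaller than $\varphi''_1$, the induction hypothesis delivers $\psi_i = \psi'_i$, closing the case.

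The main obstacle is making the ``template extraction'' step rigorous: one needs that when a compound element of $\cA_{\cL'}$ built from constructors, projections, aggregations, and compositions is evaluated on a tuple of $F_{\cL'}$-formulas, the resulting formula uniquely decomposes into a non-argument skeleton (the template) and the subformulas plugged into its argument slots. This relies on the freeness of $F_{\cL'}$, which guarantees unique syntactic parsing of every output formula along its top-level $\cL'$-constructors, so that the argument positions of the template $\hat\tau(c'')$ correspond precisely to the uniquely determined subtrees below the template's skeleton. Once this freeness observation is in place, the induction itself is routine and the result follows, as claimed, essentially from the injectivity of $\hat\tau$.
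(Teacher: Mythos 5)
Your overall strategy --- structural induction on $\varphi''$, reducing everything to the injectivity of $\hat\tau$ --- is exactly the route the paper intends; in fact the paper offers no proof beyond the remark that the result ``follows immediately since $\hat\tau_{\cL''\to\cL'}$ is injective'', so your write-up is a genuine expansion of it. The trouble is the step you yourself single out as the main obstacle: the claim that the output formula uniquely determines the template $\hat\tau(c'')$ together with the argument tuple. Freeness of $F_{\cL'}$ gives unique readability of a formula with respect to the $\cL'$-constructors, but it does not give unique decomposition into a template from $\cA_{\cL'}$ plus filled-in arguments, because elements of $\cA_{\cL'}$ may be built from projections and aggregations and can therefore overlap on special inputs. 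Concretely, nothing in the definition of a constructor translation prevents $\hat\tau(c''_1)={\lconj}^\bullet$ and $\hat\tau(c''_2)={\lconj}^\bullet\circ\langle\proj^2_2,\proj^2_1\rangle$ for two distinct binary constructors of $\cL''$: these are distinct maps, so $\hat\tau$ is still injective, yet $\tau(c''_1(\psi,\psi))=\tau(\psi)\lconj\tau(\psi)=\tau(c''_2(\psi,\psi))$ while $c''_1(\psi,\psi)\neq c''_2(\psi,\psi)$. Worse, $\hat\tau(c'')=\proj^1_1$ is a legitimate image for a unary $c''$, and then $\tau(c''(\psi))=\tau(\psi)$. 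In either case $\tau$ fails to be injective, so the template-extraction lemma is false for arbitrary constructor translations and your induction does not close.

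To repair the argument you need a hypothesis that the paper leaves implicit but that holds in all of its examples: each $\hat\tau(c'')$ with $c''\in C_{\cL''\,n}$, $n\geq 1$, should have an outermost $\cL'$-constructor (not a bare projection), and the resulting head patterns of distinct constructors --- and of the translated propositional symbols and $0$-ary constructors --- should be pairwise non-overlapping, in the sense that no formula in the range of one template is also in the range of another. Under such a condition your shape analysis, the recovery of $n=m$, $c''_1=c''_2$ and $\tau(\psi_i)=\tau(\psi'_i)$, and hence the whole induction, do go through by unique readability. As written, however, the proof has a genuine hole exactly where you flagged it, and the appeal to freeness of $F_{\cL'}$ alone does not fill it.
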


We now present several running examples of translation maps.

\begin{example} \em \label{ex:PLJtauhat}
Consider Example~\ref{ex:PLJtauhatC} for the signature of logics $\PL$ and $\It$ and for the constructor translation $\hat \tau_{\PL \to \It}$. 
The formula translation map $\tau_{\PL \to \It}$
 induced by  $\hat \tau_{\PL \to \It}$ is  inductively defined as follows:

\begin{itemize}

\item $\tau_{\PL \to \It}(p^\PL)={\lneg}^\It {\lneg}^\It p^\It$

\item $\tau_{\PL \to \It}(\bot^\PL)=\bot^\It$

\item $\tau_{\PL \to \It}({\lneg}^\PL \varphi)={\lneg}^\It \tau_{\PL \to \It}(\varphi)$

\item $\tau_{\PL \to \It}(\varphi_1 \lconj^\PL \varphi_2)=\tau_{\PL \to \It}(\varphi_1) \lconj^\It \tau_{\PL \to \It}(\varphi_2)$

\item $\tau_{\PL \to \It}(\varphi_1 \ldisj^\PL \varphi_2)={\lneg}^\It (({\lneg}^\It \tau_{\PL \to \It}(\varphi_1)) \lconj^\It ({\lneg}^\It \tau_{\PL \to \It}(\varphi_2)))$

\item $\tau_{\PL \to \It}(\varphi_1 \limp^\PL \varphi_2)={\lneg}^\It (\tau_{\PL \to \It}(\varphi_1) \lconj^\It ({\lneg}^\It \tau_{\PL \to \It}(\varphi_2)))$. 

\end{itemize}
Observe that $\tau_{\PL \to \It}$ is the well known \emph{G\"odel translation}
(see~\cite{god:86}).\curtains
\end{example}

Alternatively we could have used the following  constructor translation:
$$\begin{array}{ccc}
 p^\PL & \mapsto & {{\lneg}^\It}^\bullet \circ {{\lneg}^\It}^\bullet \circ {p^\It}^\bullet\\[1mm]
\bot^\PL & \mapsto & {\bot^\It}^\bullet \\[1mm]
{\lneg}^\PL & \mapsto &  {{\lneg}^\It}^\bullet \\[1mm]
 \lconj^\PL & \mapsto & {\lconj^\It}^\bullet \\[1mm]
 \ldisj^\PL & \mapsto & {{\lneg}^\It}^\bullet \circ {\lconj^\It}^\bullet \circ \langle {{\lneg}^\It}^\bullet, {{\lneg}^\It}^\bullet \rangle  \\[2mm]
 \limp^\PL & \mapsto & {\limp^\It}^\bullet

\end{array}$$
which  induces the \emph{Gentzen translation} (nowadays known as \emph{G\"odel-Gentzen translation} see~\cite{gen:69}).

\begin{example} \em \label{ex:JS4tauhat}
Consider Example~\ref{ex:CJ} for the signature of $\It$.  Assume that modal logic $\Sfour$ is endowed with the following sets of constructors $C_{\Sfour\,0}=\{\bot^\Sfour\}$,  $C_{\Sfour\,1}=\{{\lneg}^\Sfour,\lnec^\Sfour,\lposs^\Sfour\}$ and $C_{\Sfour\,2}=\{\lconj^\Sfour,\ldisj^\Sfour,\limp^\Sfour\}$. 
The constructor translation  denoted by $\hat \tau_{\It \to \Sfour}$  is defined as follows:
$$\begin{array}{ccc}
 p^\It & \mapsto & {\lnec^\Sfour}^\bullet \circ  {p^\Sfour}^\bullet\\[1mm]
 \bot^\It & \mapsto &  {\bot^\Sfour}^\bullet \\[1mm]
 {\lneg}^\It & \mapsto &{\lnec^\Sfour}^\bullet \circ  {{\lneg}^\Sfour}^\bullet \\[1mm]
 \lconj^\It & \mapsto &  {\lconj^\Sfour}^\bullet \\[1mm]
 \ldisj^\It & \mapsto &  {\ldisj^\Sfour}^\bullet \\[1mm]
 \limp^\It & \mapsto & {\lnec^\Sfour}^\bullet \circ {\limp^\Sfour}^\bullet
\end{array}$$
The  formula translation $\tau_{\It \to \Sfour}$ induced by $\hat \tau_{\It \to \Sfour}$ is  inductively defined as follows:

\begin{itemize}

\item $\tau_{\It \to \Sfour}(p^\It)=\lnec^\Sfour p^\Sfour$

\item $\tau_{\It \to \Sfour}(\bot^\It)=\bot^\Sfour$

\item $\tau_{\It \to \Sfour}({\lneg}^\It \varphi)=\lnec^\Sfour {\lneg}^\Sfour \tau_{\It \to \Sfour}(\varphi)$

\item $\tau_{\It \to \Sfour}(\varphi_1 \lconj^\It \varphi_2)=\tau_{\It \to \Sfour}(\varphi_1) \lconj^\Sfour \tau_{\PL \to \It}(\varphi_2)$

\item $\tau_{\It \to \Sfour}(\varphi_1 \ldisj^\It \varphi_2)=\tau_{\It \to \Sfour}(\varphi_1) \ldisj^\Sfour \tau_{\PL \to \It}(\varphi_2)$

\item $\tau_{\It \to \Sfour}(\varphi_1 \limp^\It \varphi_2)=\lnec^\Sfour(\tau_{\It \to \Sfour}(\varphi_1) \limp^\Sfour \tau_{\PL \to \It}(\varphi_2))$. 

\end{itemize}
This map is the well known \emph{G\"odel-McKinsey-Tarski  translation}
(see~\cite{ryb:97,kin:48}). \curtains
\end{example}

\begin{example} \em \label{ex:PLJ3}
  Suppose that classical  logic $\PL$ and Ja\'skowski's paraconsistent logic $\J_3$ are endowed with the following sets of constructors $C_{\PL\,1}=\{{\lneg}^\PL\}$  and $C_{\PL\,2}=\{\limp^\PL\}$, and   $C_{\J_3\,1}=\{{\sim}^{\J_3}\}$ and $C_{\J_3\,2}=\{\lconj^{\J_3},\ldisj^{\J_3},\limp^{\J_3}\}$, respectively.
The constructor translation  denoted by $\hat \tau_{\PL \to \J_3}$  is defined as follows:
$$\begin{array}{ccl}
p^\PL & \mapsto &  {p^{\J_3}}^\bullet\\[1mm]
{\lneg}^\PL & \mapsto &  {{\sim}^{\J_3}}^\bullet \circ {\limp^{\J_3}}^\bullet \circ \langle  {{\sim}^{\J_3}}^\bullet, \proj^1_1 \rangle \\[1mm]
\limp^\PL & \mapsto &  {{\limp}^{\J_3}}^\bullet \circ\langle  {\limp^{\J_3}}^\bullet \circ \langle  {{\sim}^{\J_3}}^\bullet \circ \proj^2_1,\proj^2_1 \rangle,\proj^2_2\, \rangle.
\end{array}$$
The formula translation $\tau_{\PL \to \J_3}$ 
(see~\cite{ott:70,ott:85,ott:00,blo:01}) induced by $\hat \tau_{\PL \to \J_3}$ 
is  inductively defined as follows:

\begin{itemize}

\item $\tau_{\PL \to \J_3}(p^\PL)=p^{\J_3}$

\item $\tau_{\PL \to \J_3}({\lneg}^\PL \varphi)=  {\sim}^{\J_3}(({\sim}^{\J_3} \tau_{\PL \to \J_3}(\varphi)) \limp^{\J_3} \tau_{\PL \to \J_3}(\varphi))$ 

\item $\tau_{\PL \to \J_3}(\varphi_1\limp^{\PL} \varphi_2)=  (({\sim}^{\J_3}  \tau_{\PL \to \J_3}(\varphi_1))\limp^{\J_3}  \tau_{\PL \to \J_3}(\varphi_1)) \limp^{\J_3} \tau_{\PL \to \J_3}(\varphi_2)$. \curtains 

\end{itemize}
\end{example}

\section{Gentzen calculus for the combined logic} \label{sec:combconstrans}

In this section we explain how to obtain a Gentzen calculus for the  coexistent combination of logics related by a
symbolic translation. We start by providing some relevant concepts related to Gentzen calculi (see~\cite{tro:00}).  

 A \emph{sequent} over a logic $\cL$ is a pair $(\Gamma,\Delta)$, denoted by $\Gamma \to \Delta$, where $\Gamma$ and $\Delta$ are finite multisets of formulas in $F_\cL$.  A \emph{rule} is a pair composed by a finite set of sequents, the \emph{premises}, and a sequent, the \emph{conclusion}, that we denote by 
$$\dfrac{\Gamma_1 \to \Delta_1 \quad \cdots \quad \Gamma_n \to \Delta_n}{\Gamma \to \Delta}$$
An \emph{axiom} is a rule without premises either
of the form $$p,\Gamma \to \Delta,p$$ called (\Ax),  or of the form
$$c_1(p),\Gamma \to \Delta,c_1(p)$$
called $(\Ax_{c_1})$,  or of the form 
$$\Gamma \to \Delta,p,c_1(p)$$ 
called $(\Ax_{\to c_1})$,  
or of the form 
$$\bot,\Gamma \to \Delta$$
called $(\Ax_\bot)$
where $p \in P_\cL$, $\bot \in C_{\cL\,0}$ and $c_1 \in C_{\cL\,1}$. 
A \emph{left rule} for a constructor $c \in C_{\cL\,n}$ with $n \in \nats$ denoted by L$_c$  is a rule with conclusion of the form $$c(\beta_1,\dots,\beta_n), \Gamma \to \Delta$$ and a \emph{right rule}  for $c$ denoted by R$_c$ is a rule with conclusion of the form $$\Gamma \to \Delta,c(\beta_1,\dots,\beta_n).$$ 
Furthermore, a \emph{left rule} for constructors $c_1 \in C_{\cL\,1}$ and  $c \in C_{\cL\,n}$ with $n \in \nats$ denoted by L$_{c_1 c}$  is a rule with conclusion of the form $$c_1(c(\beta_1,\dots,\beta_n)), \Gamma \to \Delta$$ and a \emph{right rule}  for $c_1$ and $c$ denoted by R$_{c_1c}$ is a rule with conclusion of the form $$\Gamma \to \Delta,c_1(c(\beta_1,\dots,\beta_n))$$
and we assume that there are no left and right rules for $c_1$.

Moreover we say that rules for $c$ and $c_1c$ are \emph{strictly self-contained} whenever their premises  only contain formulas in $\{\beta_1,\dots,\beta_n\}$ besides $\Gamma' \subseteq\Gamma$ and $\Delta' \subseteq \Delta$.

A \emph{Gentzen calculus} $\text{G}_\cL$ for $\cL$  is composed of a finite set of axioms  and a finite set of  left and right rules such that $(\Ax_{c_1})$ is present if and only if $(\Ax_{\to c_1})$ is present.  When all the rules for the constructors are strictly self-contained we say that $\text{G}_\cL$ is also \emph{strictly self-contained}. 
%

A \emph{derivation} for $\Psi \to \Lambda$ in $\text{G}_\cL$ is a  sequence
$\Psi_1 \to \Lambda_1 \dots \Psi_n \to \Lambda_n$ such that 
$\Psi_1 \to \Lambda_1$ is $\Psi \to \Lambda$ and for $j=1,\dots,n$

\begin{itemize}

\item either $\Psi_j \to \Lambda_j$ is an instance of an axiom 

\item or $\Psi_j \to \Lambda_j$ is the conclusion of an instance of a rule and the premises appear from $j+1$ to $n$.

\end{itemize}

When there is a derivation for $\Psi \to \Lambda$ in $\text{G}_\cL$ we may write 
$$\der_{\text{G}_\cL} \Psi \to \Lambda.$$
We say that $\varphi$ is a \emph{theorem} in $\cL$, written $$\der_{\cL} \varphi$$ whenever 
$\der_{\text{G}_\cL} \; \to \varphi$.

Let $\hat \tau$ be a constructor translation from $C_{\cL''} \cup P_{\cL''} \to \cA_{\cL'}$. 
It is convenient for defining the  coexistent combination that the constructors of $\cL''$ and $\cL'$ related by $\hat \tau$ have the same name in each of the component logics. 
We say that $c'' \in C_{\cL''\, n}$ is $\tau$-\emph{identified} with $c' \in C_{\cL'\, n}$ whenever $\hat \tau(c'')={c'}^\bullet$. Similarly for the propositional symbols. In this case we use the same symbol for referring to both $c''$ and $c'$ in each of the logics and redefine $\hat \tau$ accordingly.


\begin{example} \em \label{ex:PLJtauhatrhoPL}
Recall Example~\ref{ex:PLJtauhat}. Then taking into account $\hat \tau_{\PL \to \It}$,  we assume that $\PL$ and $\It$ have the signatures and sets of propositional symbols as follows

\begin{itemize}

\item $P_\PL$

\item $C_{\PL\,0}=\{\bot\}$, $C_{\PL\,1}=\{\lneg\}$ and $C_{\PL\,2}=\{\lconj,\limp^\PL,\ldisj^\PL\}$

\item $P_{\It}$

\item $C_{\It\,0}=\{\bot\}$, $C_{\It\,1}=\{\lneg\}$ and $C_{\It\,2}=\{\lconj,\limp^\It,\ldisj^\It\}$

\end{itemize} 
Moroever
the constructor translation $\hat {\tau}_{\PL \to \It}$ is now as follows:
$$\begin{array}{ccc}
 p^\PL & \mapsto & {{\lneg}}^\bullet \circ {{\lneg}}^\bullet \circ {{p}^\It}^\bullet\\[1mm]
\bot & \mapsto &  {\bot}^\bullet\\[1mm]
{\lneg} & \mapsto & {{\lneg}}^\bullet \\[1mm]
 \lconj & \mapsto & {\lconj}^\bullet \\[1mm]
 \ldisj^\PL & \mapsto & {{\lneg}}^\bullet \circ {\lconj}^\bullet \circ \langle {{\lneg}}^\bullet, {{\lneg}}^\bullet \rangle  \\[2mm]
 \limp^\PL & \mapsto & {{\lneg}}^\bullet \circ {\lconj}^\bullet \circ \langle \proj^2_1, 
 {{\lneg}}^\bullet \rangle

\end{array}$$
\curtains

\end{example}

We now discuss the renaming of the symbols for the coexistent combination of $\It$ and $\Sfour$ induced by $\hat \tau_{\It \to \Sfour}$.

\begin{example} \em \label{ex:JS4tauhatrho}
Consider Example~\ref{ex:JS4tauhat}.  Then taking into account $\hat \tau_{\It \to \Sfour}$, we suppose that $\It$ and $\Sfour$ have the signatures and sets of propositional symbols as follows

\begin{itemize}

\item $P_{\It}$

\item $C_{\It\,0}=\{\bot\}$, $C_{\It\,1}=\{\lneg^\It\}$ and $C_{\It\,2}=\{\lconj,\limp^\It,\ldisj\}$

\item $P_{\Sfour}$

\item $C_{\Sfour\,0}=\{\bot\}$, $C_{\Sfour\,1}=\{\lneg^\Sfour,\lnec^\Sfour,\lposs^\Sfour\}$ and $C_{\Sfour \,2}=\{\lconj,\limp^\Sfour,\ldisj\}$. 

\end{itemize}
Moreoever, the constructor translation $\hat \tau_{\It \to \Sfour}$  is redefined as follows:
$$\begin{array}{ccc}
 p^\It & \mapsto & {\lnec^\Sfour}^\bullet \circ  {p^\Sfour}^\bullet\\[1mm]
 \bot & \mapsto &  {\bot}^\bullet \\[1mm]
 {\lneg}^\It & \mapsto &{\lnec^\Sfour}^\bullet \circ  {{\lneg}^\Sfour}^\bullet \\[1mm]
 \lconj & \mapsto &  {\lconj}^\bullet \\[1mm]
 \ldisj & \mapsto &  {\ldisj}^\bullet \\[1mm]
 \limp^\It & \mapsto & {\lnec^\Sfour}^\bullet \circ {\limp^\Sfour}^\bullet
\end{array}$$
 \curtains
\end{example}

Finally, we discuss the renaming of the symbols for the coexistent combination of $\PL$ and $\J_3$ induced by $\hat \tau_{\PL \to \J_3}$.

\begin{example} \em \label{ex:PLJ3hatrho}
Consider Example~\ref{ex:PLJ3}.
Then taking into account $\hat \tau_{\PL\to \J_3}$,  we assume that $\PL$ and $\J_3$ are defined over the following signatures and sets of propositional symbols

\begin{itemize}

\item $P_\PL$ is a denumerable set $P$ of propositional symbols

\item $C_{\PL\,1}=\{\lneg^\PL\}$ and $C_{\PL\,2}=\{\limp^\PL\}$

\item $P_{\J_3}$ is  $P$

\item $C_{\J_3\,1}=\{{\sim}^{\J_3}\}$ and $C_{\J_3 \,2}=\{\lconj^{\J_3},\limp^{\J_3},\ldisj^{\J_3}\}$. 

\end{itemize}
Moreover, the constructor translation $\hat \tau_{\PL \to \J_3}$  is redefined as follows:
$$\begin{array}{ccl}
p & \mapsto &  p^\bullet\\[1mm]
{\lneg}^\PL & \mapsto &  {{\sim}^{\J_3}}^\bullet \circ {\limp^{\J_3}}^\bullet \circ \langle  {{\sim}^{\J_3}}^\bullet, \proj^1_1 \rangle \\[1mm]
\limp^\PL & \mapsto &  {{\limp}^{\J_3}}^\bullet \circ\langle  {\limp^{\J_3}}^\bullet \circ \langle  {{\sim}^{\J_3}}^\bullet \circ \proj^2_1,\proj^2_1 \rangle,\proj^2_2\, \rangle.
\end{array}$$
\curtains
\end{example}

We now define the logic $$\LLll$$  resulting from the  coexistent combination of $\cL''$ and $\cL'$ induced by $\hat \tau$. We start by defining  the set of formulas of the coexistent combined logic. 
The set of \emph{propositional symbols} $$P_\LLll$$   is 
$P_{\cL'}$ and the family $C_{\LLll}$ of \emph{constructors}  is such that
$$C_{\LLll\,0}=  C_{\cL''\,0} \cup C_{\cL'\,0} \cup (P_{\cL''} \setminus P_{\cL'}) \quad \text{and} \quad C_{\LLll\, n}=C_{\cL''\,n} \cup C_{\cL'\,n} \; \text{ for } n \in \pnats.$$  

\begin{example} \em \label{ex:PLJtauhatrhosig}
Recall Example~\ref{ex:PLJtauhatrhoPL}. 
Then the set of propositional symbols  $$P_\PLJ$$ is 
$P_{\It}$ and the family of constructors is 
\begin{itemize}
\item $C_{\PLJ\,0}=\{\bot\} \cup P_\PL$

\item $C_{\PLJ\,1}=\{\lneg\}$

\item $C_{\PLJ\,2}=\{\lconj,\limp^\PL,\ldisj^\PL,\limp^\It,\ldisj^\It\}.$ \curtains
\end{itemize}
\end{example}

Given a Gentzen calculus $\GLl$ for logic $\cL'$, the \emph{Gentzen calculus} $$\GLLll$$ \emph{for  the coexistent combination} of 
$\cL''$ and $\cL'$ induced by $\hat \tau$ is composed of the rules of $\GLl$ plus the following rules 
$$(\text{L}_{P_{\cL''}}) \quad \frac{\hat\tau(p''),\Gamma \to \Delta}{p'',\Gamma \to \Delta} \qquad \qquad (\text{R}_{P_{\cL''}}) \quad \frac{\Gamma \to \Delta,\hat\tau(p'')}{\Gamma \to \Delta,p''}$$
for each $p'' \in P_{\cL''}$ whenever $P_{\cL''} \neq P_{\cL'}$, and the rules
$$(\text{L}_{c''}) \quad \frac{\hat\tau(c'')(\beta_1,\dots,\beta_n),\Gamma \to \Delta}{c''(\beta_1,\dots,\beta_n),\Gamma \to \Delta} \qquad \qquad (\text{R}_{c''}) \quad \frac{\Gamma \to \Delta,\hat\tau(c'')(\beta_1,\dots,\beta_n)}{\Gamma \to \Delta,c''(\beta_1,\dots,\beta_n)}$$
for each 
$c'' \in C_{\cL''\,n} \setminus C_{\cL'\,n}$. 

Note that we can instantiate $\beta_1,\dots,\beta_n$ in the  rules of G$_{\cL'}$ in G$_\LLll$ with formulas of $F_\LLll$. Observe that propositional symbols and constructors cannot be instantiated.

\begin{example} \em \label{ex:GPLJ}
Consider Examples~\ref{ex:PLJtauhatrhoPL} and~\ref{ex:PLJtauhatrhosig}  and the strictly self-contained Gentzen calculus $\text{G}_{\It}$ (see~\cite{tro:00}). The Gentzen calculus $$\GPLJ$$ for the coexistent combination of $\PL$ and $\It$ induced by $\hat {\tau}_{\PL \to \It}$ is composed of the following axioms 
$$(\text{Ax})  \quad p^\It,\Gamma \to p^\It \quad \quad (\Ax_\bot) \quad \bot, \Gamma \to \beta$$
and the following rules

$$(\text{L}_{P_\PL}) \quad \frac{\lneg \lneg p^\It,\Gamma \to \beta}{p^\PL,\Gamma \to \beta} \qquad \qquad (\text{R}_{P_\PL}) \quad \frac{\Gamma \to \lneg\lneg p^\It}{\Gamma \to p^\PL}$$
\ \\
$$(\text{L}_{\lneg}) \quad \frac{\Gamma \to \beta_1}{\lneg \beta_1,\Gamma \to \beta_2} \qquad \qquad (\text{R}_{\lneg}) \quad \frac{\beta,\Gamma \to \bot}{\Gamma \to \lneg \beta}$$
\ \\
$$(\text{L}_{\lconj}) \quad \frac{\beta_1,\beta_2,\Gamma \to \beta}{\beta_1 \lconj \beta_2, \Gamma \to \beta} \qquad \qquad (\text{R}_{\lconj}) \quad \frac{\Gamma \to \beta_1 \quad \Gamma \to \beta_2}{\Gamma \to \Delta,\beta_1 \lconj \beta_2}$$
\\
$$(\text{L}_{\ldisj^\PL}) \quad \frac{\lneg((\lneg \beta_1) \lconj (\lneg \beta_2)),\Gamma \to \beta}{\beta_1 \ldisj^\PL \beta_2,\Gamma \to \beta} \quad  \qquad \qquad (\text{R}_{\ldisj^\PL}) \quad \frac{\Gamma \to \lneg((\lneg \beta_1) \lconj (\lneg \beta_2))}{\Gamma \to  \beta_1 \ldisj^\PL \beta_2}$$
\\
$$(\text{L}_{\limp^\PL}) \quad \frac{\lneg(\beta_1 \lconj (\lneg \beta_2)),\Gamma \to \beta}{\beta_1 \limp^\PL \beta_2,\Gamma \to \beta} \quad  \qquad \qquad (\text{R}_{\limp^\PL}) \quad \frac{\Gamma \to \lneg(\beta_1 \lconj (\lneg \beta_2))}{\Gamma \to  \beta_1 \limp^\PL \beta_2}$$
\ \\
$$(\text{L}_{\ldisj^\It}) \quad \frac{\beta_1,\Gamma \to \beta \quad \beta_2,\Gamma \to \beta}{\beta_1 \ldisj^\It \beta_2,\Gamma \to \beta} \quad  \qquad \qquad (\text{R}_{\ldisj^\It_j}) \quad \frac{\Gamma \to \beta_j}{\Gamma \to  \beta_1 \ldisj^\It \beta_2} 
\; \text{ for } j=1,2$$
\ \\
$$(\text{L}_{\limp^\It}) \quad \frac{\beta_1 \limp^\It \beta_2,\Gamma \to \beta_1 \quad \beta_2,\Gamma \to \beta}{\beta_1 \limp^\It \beta_2,\Gamma \to \beta}
 \qquad \qquad (\text{R}_{\limp^\It}) \quad \frac{\beta_1,\Gamma \to\beta_2}{\Gamma \to \beta_1 \limp^\It\beta_2}$$
\curtains
\end{example}

\begin{example} \em \label{ex:GPLJder}
Recall Example~\ref{ex:GPLJ}. The following sequence
\sder{
1. $\to (\lneg \lneg p^\It) \limp^\PL p^\It$ & R$_{\limp^\PL}$ 2\\[1mm]
2. $\to \lneg((\lneg \lneg p^\It) \lconj (\lneg p^\It))$ & R$_{\lneg}$ 3\\[1mm]
3. $(\lneg \lneg p^\It) \lconj (\lneg p^\It) \to \bot$ & L$_{\lconj}$ 4\\[1mm]
4. $\lneg \lneg p^\It,\lneg p^\It \to \bot$ & L$_{\lneg}$ 5\\[1mm]
5. $\lneg p^\It \to \lneg p^\It$ & R$_{\lneg}$ 6\\[1mm]
6. $p^\It,\lneg p^\It \to \bot$ & L$_{\lneg}$ 7\\[1mm]
7. $p^\It\to p^\It $ & Ax
}
\noindent
is a derivation for $\der_{{\PLJ}}  (\lneg \lneg p^\It) \limp^\PL p^\It$.
The following sequence
\sder{
1. $\to (p_1^\It  \limp^\It p_2^\It) \limp^\It (p_1^\It  \limp^\PL p_2^\It)$ & R$_{\limp^\It}$ 2\\[1mm]
2. $p_1^\It  \limp^\It p_2^\It \to p_1^\It  \limp^\PL p_2^\It$ & R$_{\limp^\PL}$ 3\\[1mm]
3. $p_1^\It  \limp^\It p_2^\It \to\lneg(p_1^\It  \lconj (\lneg p_2^\It))$  & R$_{\lneg}$ 4\\[1mm]
4. $p_1^\It  \lconj (\lneg p_2^\It),p_1^\It  \limp^\It p_2^\It\to \bot$ & L$_{\lconj}$ 5\\[1mm]
5. $p_1^\It,\lneg p_2^\It,p_1^\It  \limp^\It p_2^\It\to \bot$ & L$_{\limp^\It}$ 6,7 \\[1mm]
6. $p_1^\It,\lneg p_2^\It,p_1^\It  \limp^\It p_2^\It\to p_1^\It$ & $\Ax$\\[1mm]
7. $p_2^\It,p_1^\It ,\lneg p_2^\It \to \bot$ & L$_{\lneg}$ 8\\[1mm]
8. $p_2^\It,p_1^\It\to p_2^\It$ & $\Ax$
}
\noindent
is a derivation for $\der_{{\PLJ}}  (p_1^\It \limp^\It p_2^\It) \limp^\It (p_1^\It \limp^\PL p_2^\It)$.
\curtains
\end{example}

\begin{example} \em \label{ex:GJS4}
Consider the strictly self-contained Gentzen calculus  $\text{G}_{\Sfour}$ (see~\cite{tro:00}). Recall Example~\ref{ex:JS4tauhatrho}.
The Gentzen calculus $$\GJS$$ is composed of the following axioms 
$$(\text{Ax})  \quad p^\Sfour,\Gamma \to \Delta,p^\Sfour \quad \quad (\Ax_\bot) \quad \bot, \Gamma \to \Delta$$
and the following rules

$$(\text{L}_{P_\It}) \quad \frac{\lnec^\Sfour p^\Sfour,\Gamma \to \Delta}{p^\It,\Gamma \to \Delta} \qquad \qquad (\text{R}_{P_\It}) \quad \frac{\Gamma \to \Delta,\lnec^\Sfour p^\Sfour}{\Gamma \to \Delta,p^\It}$$
\ \\
$$(\text{L}_{\lneg^\It}) \quad \frac{\lnec^\Sfour  \lneg^\Sfour\beta,\Gamma \to \Delta}{\lneg^\It \beta, \Gamma \to \Delta} \qquad \qquad (\text{R}_{\lneg^\It}) \quad \frac{\Gamma \to \Delta,\lnec^\Sfour \lneg^\Sfour \beta}{\Gamma \to \Delta,\lneg^\It \beta}$$
\ \\
$$(\text{L}_{\lconj}) \quad \frac{\beta_1,\beta_2,\Gamma \to \Delta}{\beta_1 \lconj \beta_2, \Gamma \to \Delta} \qquad \qquad (\text{R}_{\lconj}) \quad \frac{\Gamma \to \Delta,\beta_1 \quad \Gamma \to \Delta,\beta_2}{\Gamma \to \Delta,\beta_1 \lconj \beta_2}$$
\ \\
$$(\text{L}_{\limp^\It}) \quad \frac{\lnec^\Sfour(\beta_1 \limp^\Sfour \beta_2),\Gamma \to \Delta}{\beta_1 \limp^\It \beta_2,\Gamma \to \Delta} \qquad \qquad (\text{R}_{\limp^\It}) \quad \frac{\Gamma \to \Delta,\lnec^\Sfour(\beta_1 \limp^\Sfour \beta_2)}{\Gamma \to \Delta,\beta_1 \limp^\It \beta_2}$$
\ \\
$$(\text{L}_{\ldisj}) \quad \frac{\beta_1,\Gamma \to \Delta \quad \beta_2,\Gamma \to \Delta}{\beta_1 \ldisj \beta_2,\Gamma \to \Delta} \quad  \qquad \qquad (\text{R}_{\ldisj}) \quad \frac{\Gamma \to \Delta,\beta_1,\beta_2}{\Gamma \to \Delta, \beta_1 \ldisj \beta_2}$$
\ \\
$$(\text{L}_{\lneg^\Sfour}) \quad \frac{\Gamma \to \Delta,\beta}{{\lneg}^\Sfour \beta, \Gamma \to \Delta} \qquad \qquad (\text{R}_{\lneg^\Sfour}) \quad \frac{\beta,\Gamma \to \Delta}{\Gamma \to \Delta,\lneg^\Sfour \beta}$$
\ \\
$$(\text{L}_{\limp^\Sfour}) \quad \frac{\Gamma \to \Delta,\beta_1 \qquad \beta_2, \Gamma \to \Delta \quad }{\beta_1 \limp^\Sfour \beta_2,\Gamma \to \Delta} \qquad \qquad (\text{R}_{\limp^\Sfour}) \quad \frac{\beta_1,\Gamma \to \Delta,\beta_2}{\Gamma \to \Delta,\beta_1 \limp^\Sfour\beta_2}$$
\ \\
$$(\text{L}_{\lnec^\Sfour}) \quad \frac{\beta,\lnec^\Sfour \beta,\Gamma \to \Delta}{\lnec^\Sfour \beta,\Gamma \to \Delta} \qquad \qquad (\text{R}_{\lnec^\Sfour}) \quad \frac{\lnec^\Sfour \Gamma \to \lposs^\Sfour\,\Delta,\beta}{\Omega,\lnec^\Sfour \Gamma \to \lposs^\Sfour\, \Delta,\Lambda,\lnec^\Sfour \beta}$$
\ \\
$$(\text{L}_{\lposs^\Sfour}) \quad \frac{\beta,\lnec^\Sfour \Gamma \to \lposs^\Sfour\,\Delta}{\lposs^\Sfour \beta,\Omega,\lnec^\Sfour \Gamma \to \lposs^\Sfour\, \Delta,\Lambda} \qquad \qquad (\text{R}_{\lposs^\Sfour}) \quad \frac{\Gamma \to \Delta,\beta,\lposs^\Sfour \beta}{\Gamma \to \Delta,\lposs^\Sfour \beta}$$
\curtains
\end{example}

\begin{example}\em 
Recall Example~\ref{ex:GJS4}. Then the sequence
\sder{
1. $\to (\lneg^\It p^\It) \limp^\It (\lneg^\Sfour p^\It)$ & R$_{\limp^\It}$ 2\\[1mm]
2. $\to \lnec^\Sfour((\lneg^\It p^\It) \limp^\Sfour (\lneg^\Sfour p^\It))$ & R$_{\lnec^\Sfour}$ 3\\[1mm]
3. $\to (\lneg^\It p^\It) \limp^\Sfour (\lneg^\Sfour p^\It)$ & R$_{\limp^\Sfour}$ 4\\[1mm]
4. $\lneg^\It p^\It \to \lneg^\Sfour p^\It$ & R$_{\lneg^\Sfour}$ 5\\[1mm]
5. $p^\It,\lneg^\It p^\It  \to$ & L$_{\lneg^\It}$ 6\\[1mm]
6. $p^\It,\lnec^\Sfour \lneg^\Sfour p^\It  \to$ & L$_{\lnec^\Sfour}$ 7\\[1mm]
7. $\lneg^\Sfour p^\It,p^\It,\lnec^\Sfour \lneg^\Sfour p^\It  \to$ & L$_{\lneg^\Sfour}$ 8\\[1mm]
8. $p^\It,\lnec^\Sfour \lneg^\Sfour p^\It  \to p^\It$ & R$_{P_\It}$ 9 \\[1mm]
9. $p^\It,\lnec^\Sfour \lneg^\Sfour p^\It  \to \lnec^\Sfour p^\Sfour$ & L$_{P_\It}$ 10 \\[1mm]
10. $\lnec^\Sfour p^\Sfour,\lnec^\Sfour \lneg^\Sfour p^\It  \to \lnec^\Sfour p^\Sfour$ & R$_{\lnec^\Sfour}$ 11 \\[1mm]
11. $\lnec^\Sfour p^\Sfour,\lnec^\Sfour \lneg^\Sfour p^\It  \to p^\Sfour$ & L$_{\lnec^\Sfour}$ 12 \\[1mm]
12. $p^\Sfour,\lnec^\Sfour p^\Sfour,\lnec^\Sfour \lneg^\Sfour p^\It  \to p^\Sfour$ & $\Ax$  \\[1mm]
}
\noindent
is a derivation for $\der_{{\JS}} (\lneg^\It p^\It) \limp^\It (\lneg^\Sfour p^\It)$.\curtains
\end{example}

\begin{example} \em \label{ex:PLJ3G}
Consider the strictly self-contained Gentzen calculus $\text{G}_{\J_3}$ (see~\cite{avr:03}). 
Recall Example~\ref{ex:PLJ3hatrho}.
The Gentzen calculus 
$$\text{G}_{\CJ}$$ is composed of the following axioms 
$$(\text{Ax})  \quad p,\Gamma \to \Delta,p \quad \quad (\text{Ax}_{{\sim}^{\J_3}})  \quad {\sim}^{\J_3} p,\Gamma \to \Delta,{\sim}^{\J_3} p \quad \quad (\Ax_{\to{\sim}^{\J_3}}) \quad \Gamma \to \Delta, p,{\sim}^{\J_3} p$$
and the following rules 
$$(\text{L}_{{\lneg}^{\PL}}) \quad \frac{{\sim}^{\J_3}(({\sim}^{\J_3} \beta) \limp^{\J_3} \beta),\Gamma \to \Delta}{{\lneg}^{\PL}\beta,\Gamma \to \Delta} \qquad \qquad (\text{R}_{{\lneg}^{\PL}}) \quad \frac{\Gamma \to \Delta,{\sim}^{\J_3}(({\sim}^{\J_3} \beta) \limp^{\J_3} \beta)}{\Gamma \to \Delta,{\lneg}^{\PL}\beta}$$
\ \\
$$(\text{L}_{{\limp}^{\PL}}) \quad \! \!\!\frac{(({\sim}^{\J_3} \beta_1) \limp^{\J_3}  \beta_1) \limp^{\J_3} \beta_2,\Gamma \to \Delta}{\beta_1{\limp}^{\PL} \beta_2,\Gamma \to \Delta}
\quad \quad (\text{R}_{{\limp}^{\PL}}) \quad \!\!\!\frac{\Gamma \to \Delta,(({\sim}^{\J_3} \beta_1) \limp^{\J_3}  \beta_1) \limp^{\J_3} \beta_2}{\Gamma \to \Delta,\beta_1{\limp}^{\PL} \beta_2}$$
\ \\
$$(\text{L}_{{\limp}^{\J_3}}) \quad \! \!\!\frac{\beta_2,\Gamma \to \Delta \quad \Gamma \to \Delta,\beta_1}{\beta_1{\limp}^{\J_3} \beta_2,\Gamma \to \Delta}
\quad \quad (\text{R}_{{\limp}^{\J_3}}) \quad \!\!\!\frac{\beta_1,\Gamma \to \Delta,\beta_2}{\Gamma \to \Delta,\beta_1{\limp}^{\J_3} \beta_2}$$
\ \\
$$(\text{L}_{{\lconj^{\J_3}}}) \quad \! \!\!\frac{\beta_1,\beta_2,\Gamma \to \Delta}{\beta_1{\lconj^{\J_3}} \beta_2,\Gamma \to \Delta}
\quad \quad (\text{R}_{{\lconj^{\J_3}}}) \quad \!\!\!\frac{\Gamma \to \Delta,\beta_1 \quad\Gamma \to \Delta,\beta_2}{\Gamma \to \Delta,\beta_1{\lconj^{\J_3}} \beta_2}$$
\ \\
$$(\text{L}_{{\ldisj^{\J_3}}}) \quad \! \!\!\frac{\beta_1,\Gamma \to \Delta \quad \beta_2,\Gamma \to \Delta}{\beta_1{\ldisj^{\J_3}} \beta_2,\Gamma \to \Delta}
\quad \quad (\text{R}_{{\ldisj^{\J_3}}}) \quad \!\!\!\frac{\Gamma \to \Delta,\beta_1,\beta_2}{\Gamma \to \Delta,\beta_1{\ldisj^{\J_3}} \beta_2}$$
\ \\
$$(\text{L}_{{\sim}^{\J_3}{\sim}^{\J_3}}) \quad \! \!\!\frac{\beta,\Gamma \to \Delta}{{\sim}^{\J_3}{\sim}^{\J_3} \beta,\Gamma \to \Delta}
\quad \quad (\text{R}_{{\sim}^{\J_3}{\sim}^{\J_3}}) \quad \!\!\!\frac{\Gamma \to \Delta,\beta}{\Gamma \to \Delta,{\sim}^{\J_3}{\sim}^{\J_3}\beta}$$
\ \\
$$(\text{L}_{{\sim}^{\J_3}{\limp}^{\J_3}}) \quad \! \!\!\frac{\beta_1,{\sim}^{\J_3} \beta_2,\Gamma \to \Delta}{{\sim}^{\J_3}(\beta_1 {\limp}^{\J_3} \beta_2),\Gamma \to \Delta}
\quad \quad (\text{R}_{{\sim}^{\J_3}{\limp}^{\J_3}}) \quad \!\!\!\frac{\Gamma \to \Delta,\beta_1\quad \Gamma \to \Delta,{\sim}^{\J_3}\beta_2}{\Gamma \to \Delta,{\sim}^{\J_3}(\beta_1 {\limp}^{\J_3} \beta_2)}$$\ \\
$$(\text{L}_{{\sim}^{\J_3}{\lconj}^{\J_3}}) \quad \! \!\!\frac{{\sim}^{\J_3}\beta_1,\Gamma \to \Delta\quad {\sim}^{\J_3}\beta_2,\Gamma \to \Delta}{{\sim}^{\J_3}(\beta_1 {\lconj}^{\J_3} \beta_2),\Gamma \to \Delta}
\quad \quad (\text{R}_{{\sim}^{\J_3}{\lconj}^{\J_3}}) \quad \!\!\!\frac{\Gamma \to \Delta,{\sim}^{\J_3}\beta_1,{\sim}^{\J_3} \beta_2}{\Gamma \to \Delta,{\sim}^{\J_3}(\beta_1 {\lconj}^{\J_3} \beta_2)}$$
\ \\
$$(\text{L}_{{\sim}^{\J_3}{\ldisj}^{\J_3}}) \quad \! \!\!\frac{{\sim}^{\J_3}\beta_1,{\sim}^{\J_3} \beta_2,\Gamma \to \Delta}{{\sim}^{\J_3}(\beta_1 {\ldisj}^{\J_3} \beta_2),\Gamma \to \Delta}
\quad \quad (\text{R}_{{\sim}^{\J_3}{\ldisj}^{\J_3}}) \quad \!\!\!\frac{\Gamma \to \Delta,{\sim}^{\J_3}\beta_1\quad \Gamma \to \Delta,{\sim}^{\J_3}\beta_2}{\Gamma \to \Delta,{\sim}^{\J_3}(\beta_1 {\ldisj}^{\J_3} \beta_2)}$$
\curtains
\end{example}

\begin{example} \em \label{ex:derPLJ3}
Recall Example~\ref{ex:PLJ3G}. The following sequence
\sder{
1. $\to p \ldisj^{\J_3} {\lneg^\PL} p$ & R$_{\ldisj^{\J_3}}$ 2\\[1mm]
2. $\to p,{\lneg^\PL} p$ & R$_{\lneg^{\PL}}$ 3\\[1mm]
3. $\to p,{\sim}^{\J_3}(({\sim}^{\J_3} p) \limp^{\J_3} p)$ & 
R$_{\,{\sim}^{\J_3}\limp^{\J_3}}$ 4,5\\[1mm]
4. $\to p,{\sim}^{\J_3} p$ & $\Ax_{\to{\sim}^{\J_3}}$\\[1mm]
5. $\to p,{\sim}^{\J_3} p$ & $\Ax_{\to {\sim}^{\J_3}}$
}
\noindent
is a derivation for $\der_{{\CJ}} p \ldisj^{\J_3} {\lneg^\PL} p$. \curtains
\end{example}

The reader may wonder about the conservativity and the extensivity of the coexistent combination.
That is,

\begin{itemize}

\item is it the case that a theorem/validity in $\cL'$ is  a theorem/validity in $\LLll$? \\
(\emph{extensivity} for $\cL'$) 

\item is it the case that  a theorem/validity in $\LLll$  is  a theorem/validity in $\cL'$? \\
(\emph{conservativity} for $\cL'$) 

\end{itemize}
The same questions can be asked with respect to $\cL''$. 

In this section we show that this is the case for theoremhood in $\cL'$.~This is expected because $\LLll$ is defined taking as host  $\cL'$.  With respect to $\cL''$ note that so far the translation does not impose 
any relationship (semantic or deductive) between $\cL''$ and $\cL'$. Therefore no properties of $\cL''$ are reflected in the combined logic. We address this problem in the next section by considering 
richer translations.

We start by stating an important result on preservation of  sequent derivations from G$_{\cL'}$ to G$_{\LLll}$. The result follows straightforwardly since  a derivation in G$_{\cL'}$ is also a derivation in G$_\LLll$ taking into account that the rules in  G$_{\cL'}$ are in G$_\LLll$. 

\begin{prop} \em \label{prop:derextensiveLlinhaseq}
Let $\Psi',\Lambda' \subseteq F_{\cL'}$ be such that  $\der_{\text{G}_{\cL'}} \Psi' \to \Lambda'$.
Then $\der_{\text{G}_\LLll} \Psi' \to \Lambda'$.
\end{prop}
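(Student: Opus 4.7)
The plan is to simply re-use the given derivation verbatim. Let $\Psi_1 \to \Lambda_1, \dots, \Psi_n \to \Lambda_n$ be a derivation in $\GLl$ witnessing $\der_{\GLl} \Psi' \to \Lambda'$, so that $\Psi_1 \to \Lambda_1$ is $\Psi' \to \Lambda'$ and, for each $j$, the sequent $\Psi_j \to \Lambda_j$ is either an instance of an axiom of $\GLl$ or the conclusion of an instance of a rule of $\GLl$ whose premises occur from position $j+1$ to $n$. I would verify that exactly this same sequence of sequents qualifies as a derivation in $\GLLll$ of $\Psi' \to \Lambda'$.

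The argument rests on two structural observations which follow directly from the constructions of the previous section. First, the definitions $C_{\LLll\, n} = C_{\cL''\,n} \cup C_{\cL'\,n}$ for $n \in \pnats$, $C_{\LLll\,0} = C_{\cL''\,0} \cup C_{\cL'\,0} \cup (P_{\cL''} \setminus P_{\cL'})$, and $P_\LLll = P_{\cL'}$ immediately yield the inclusion $F_{\cL'} \subseteq F_\LLll$. In particular, every sequent occurring in the original derivation is a sequent over $F_\LLll$, and in fact $\Psi', \Lambda'$ already lie in $F_\LLll$. Second, by the very definition of $\GLLll$, every axiom and every left/right rule of $\GLl$ is an axiom or rule of $\GLLll$; the calculus $\GLLll$ only \emph{adds} the translation rules $(\text{L}_{P_{\cL''}}), (\text{R}_{P_{\cL''}}), (\text{L}_{c''}), (\text{R}_{c''})$ without altering or restricting anything from $\GLl$.

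With these two observations, a case analysis on each step $\Psi_j \to \Lambda_j$ finishes the proof. If $\Psi_j \to \Lambda_j$ is an instance of an axiom of $\GLl$ obtained by substituting formulas from $F_{\cL'}$ for the schematic formula letters, then the same substitution — viewed now as a substitution by formulas in $F_\LLll$ — produces an instance of the very same axiom, now regarded as an axiom of $\GLLll$; note that propositional symbols and constructors are \emph{not} instantiated in either reading, so the requirement $p \in P_\cL$ or $c_1 \in C_{\cL\,1}$ occurring in the axiom schemes is respected. Analogously, any step obtained by a rule of $\GLl$ is obtained by the same rule of $\GLLll$, and the premises continue to appear in the positions required by the definition of derivation.

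There is essentially no obstacle: the only thing one must watch is that $\GLLll$ allows the schematic metavariables $\beta_i, \Gamma, \Delta$ of the inherited rules to range over the broader set $F_\LLll$, which is what we need to assert that every $\GLl$-instance is automatically a $\GLLll$-instance. Since the paper explicitly states that this wider instantiation is permitted, the inclusion of derivations is immediate and the proposition follows.
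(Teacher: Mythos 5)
Your proposal is correct and matches the paper's own argument, which simply observes that a derivation in $\GLl$ is already a derivation in $\GLLll$ because every axiom and rule of $\GLl$ is included in $\GLLll$ and $F_{\cL'}\subseteq F_\LLll$. You merely spell out in more detail the same verbatim-reuse-of-the-derivation idea that the paper states in one sentence.
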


Extensivity for $\cL'$ is an immediate corollary of the previous proposition.

\begin{corollary} \em \label{prop:derextensiveLlinha}
Let $\varphi' \in  F_{\cL'}$ be such that  $\der_{\cL'} \varphi'$.
Then $\der_{\LLll} \varphi'$.
\end{corollary}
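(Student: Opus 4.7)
The plan is to unfold the definition of theoremhood on both sides and then invoke Proposition~\ref{prop:derextensiveLlinhaseq} on a particular sequent. By the definition of theorem given just after the definition of a derivation, the hypothesis $\der_{\cL'} \varphi'$ amounts to the existence of a derivation in $\text{G}_{\cL'}$ of the sequent $\to \varphi'$, that is, $\der_{\text{G}_{\cL'}} \; \to \varphi'$.

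Next, I would apply Proposition~\ref{prop:derextensiveLlinhaseq} with $\Psi' = \emptyset$ and $\Lambda' = \{\varphi'\}$, which is legitimate since $\varphi' \in F_{\cL'}$, so both $\Psi'$ and $\Lambda'$ are (finite multisets of) formulas of $\cL'$. The proposition directly yields $\der_{\text{G}_{\LLll}} \; \to \varphi'$. Reading this back through the definition of theorem in $\LLll$, we obtain $\der_{\LLll} \varphi'$, as desired.

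There is really no obstacle here: the entire content of the corollary is in Proposition~\ref{prop:derextensiveLlinhaseq}, whose proof in turn is already justified in the preceding paragraph by the observation that every rule of $\text{G}_{\cL'}$ is literally a rule of $\text{G}_{\LLll}$, so any $\text{G}_{\cL'}$-derivation is, as a sequence of sequents, a $\text{G}_{\LLll}$-derivation. The only thing worth remarking on is that the schematic variables $\beta_i$ and the contexts $\Gamma, \Delta$ appearing in the rules of $\text{G}_{\cL'}$ are now allowed to range over the larger language $F_{\LLll}$, but this extra freedom is not needed here, since the derivation of $\to \varphi'$ we start with uses only formulas in $F_{\cL'} \subseteq F_{\LLll}$.
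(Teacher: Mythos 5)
Your proof is correct and coincides with the paper's own (implicit) argument: the corollary is obtained exactly by instantiating Proposition~\ref{prop:derextensiveLlinhaseq} at the sequent $\to \varphi'$ and unfolding the definition of theoremhood on both sides. Your closing remark about why the $\text{G}_{\cL'}$-derivation survives unchanged in $\text{G}_{\LLll}$ matches the justification the paper gives just before stating that proposition.
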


For addressing conservativity we need an auxiliary definition and some auxiliary results. 
Given $\hat \tau_{\cL'' \to \cL'}$, we inductively define the map
$$\tau_\sqcup: F_{\LLll} \to F_{\cL'}$$
as follows

\begin{itemize}

\item $\tau_\sqcup(c'')=\hat \tau(c'')$ whenever $c'' \in C_{\cL''\, 0} \cup P_{\cL''}$

\item $\tau_\sqcup(c')=c'$ whenever $c' \in C_{\cL'\, 0} \cup  P_{\cL'}$
 
 \item $\tau_\sqcup(c''(\varphi_1,\dots,\varphi_n))=\hat \tau(c'')(\tau_\sqcup(\varphi_1),\dots,\tau_\sqcup(\varphi_n))$ 
 whenever $c'' \in C_{\cL''\, n}$
 
  \item $\tau_\sqcup(c'(\varphi_1,\dots,\varphi_n))=c'(\tau_\sqcup(\varphi_1),\dots,\tau_\sqcup(\varphi_n))$ 
 whenever $c' \in C_{\cL'\, n}$.

 \end{itemize}
 
 The following result follows straightforwardly by induction.
 
 \begin{prop} \em \label{prop:sqcuptau}
 The map  $\tau_\sqcup$ is injective. Moreover 
 $\tau_\sqcup(\varphi'')=\tau(\varphi'')$ and $\tau_\sqcup(\varphi')=\varphi'$ for every $\varphi'' \in F_{\cL''}$ and $\varphi' \in F_{\cL'}$.
 \end{prop}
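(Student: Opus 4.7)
The plan is to verify the three claims by structural induction on formulas in $F_\LLll$, mirroring the four inductive clauses in the definition of $\tau_\sqcup$. I would begin with the two equality statements, which are cleaner. For $\varphi'' \in F_{\cL''}$, a routine induction on $\varphi''$ yields $\tau_\sqcup(\varphi'')=\tau(\varphi'')$: the base cases ($c'' \in C_{\cL''\,0}$ and $p'' \in P_{\cL''}$) give $\tau_\sqcup(\varphi'') = \hat\tau(\varphi'')$, which matches the base clauses of $\tau$; the inductive step on $c''(\varphi_1'',\ldots,\varphi_n'')$ is immediate because both maps apply the same $\hat\tau(c'')$ to the recursively translated arguments. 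For $\varphi' \in F_{\cL'}$, an analogous induction gives $\tau_\sqcup(\varphi')=\varphi'$ directly from the clauses treating $c' \in C_{\cL'\,0} \cup P_{\cL'}$ and $c'(\varphi_1,\ldots,\varphi_n)$ with $c' \in C_{\cL'\,n}$.

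For injectivity, I would proceed by structural induction on $\varphi \in F_\LLll$, showing that $\tau_\sqcup(\varphi)=\tau_\sqcup(\psi)$ forces $\varphi=\psi$. In the base cases, two distinct $0$-ary constructors or propositional symbols coming from either component have distinct images: if both are in $C_{\cL''\,0} \cup P_{\cL''}$, then the injectivity of $\hat\tau$ does the job; if both are in $C_{\cL'\,0} \cup P_{\cL'}$, the images are themselves, so they coincide only when equal; and the mixed case is handled by observing that the image of a non-identified source symbol under $\hat\tau$ cannot equal a single host symbol. For the inductive step, one splits according to whether the outermost symbol of $\varphi$ is in $C_{\cL''\,n}\setminus C_{\cL'\,n}$ or in $C_{\cL'\,n}$, and similarly for $\psi$; inside each matched case, peeling off the outermost $\hat\tau(c'')$ (respectively $c'$) and invoking the induction hypothesis on the subformulas yields $\varphi=\psi$. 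The moreover-clause $\tau_\sqcup(\varphi'')=\tau(\varphi'')$ combined with Proposition~\ref{prop:tauinj} can be used to handle the comparison of translated subformulas of $\cL''$.

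The main obstacle is the non-collision step: showing that a formula of $F_\LLll$ whose root lies in $C_{\cL''\,n}\setminus C_{\cL'\,n}$ has a $\tau_\sqcup$-image syntactically distinguishable from that of any formula rooted in $C_{\cL'\,n}$. This is the place where one must exploit the specific shape of $\hat\tau(c'')$ as a map in $\cA_{\cL'}$ built from composition, aggregation and projection, so that its outermost layer uniquely identifies the sub-clause of the definition from which it was produced. Once this is in place, the rest of the induction is purely bookkeeping.
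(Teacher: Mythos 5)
Your handling of the two ``moreover'' clauses is fine (and the paper itself offers no detailed argument to compare against: it simply asserts the whole proposition ``follows straightforwardly by induction''). The trouble is the injectivity claim, and it sits exactly at the step you yourself flag as ``the main obstacle'' and then defer. No appeal to ``the specific shape of $\hat\tau(c'')$'' can discharge that step, because the collision you need to exclude actually occurs whenever $\hat\tau(c'')$ is a proper composite of host constructors: the formula $\hat\tau(c'')(\varphi_1,\dots,\varphi_n)$ with host arguments is itself a formula of $F_{\cL'}\subseteq F_{\LLll}$, is fixed by $\tau_\sqcup$, and therefore has the same $\tau_\sqcup$-image as the distinct formula $c''(\varphi_1,\dots,\varphi_n)$. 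Concretely, in $\PLJ$ one has
$$\tau_\sqcup(p_1^\It\limp^\PL p_2^\It)=\lneg(p_1^\It\lconj(\lneg p_2^\It))=\tau_\sqcup(\lneg(p_1^\It\lconj(\lneg p_2^\It))),$$
with the two arguments distinct elements of $F_{\PLJ}$; likewise $\tau_\sqcup(p^\PL)=\lneg\lneg p^\It=\tau_\sqcup(\lneg\lneg p^\It)$, which already defeats your base case, where you only compare symbols against symbols and never against compound host formulas. So the induction cannot be completed as you outline it: $\tau_\sqcup$ is injective on $F_{\cL''}$ (where it agrees with $\tau$, by Proposition~\ref{prop:tauinj}) and on $F_{\cL'}$ (where it is the identity), but not on all of $F_{\LLll}$.

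What you can honestly prove, and what the later arguments (e.g.\ the basis cases in the proof of Proposition~\ref{prop:taucuptocup}) actually need, is a restricted form: injectivity of $\tau_\sqcup$ on each component sublanguage, together with the observation that a $\tau_\sqcup$-preimage of a propositional symbol, of $\bot$, or of a formula $c_1'(p')$ is forced to be that very formula (since for a non-identified source constructor or symbol the translation is a proper composite). I would recommend either restating the proposition with injectivity restricted to $F_{\cL''}$ and to $F_{\cL'}$, or proving exactly those preimage-characterisation facts; as written, your proof correctly isolates where the difficulty lies but wrongly promises that the remaining step is ``purely bookkeeping''.
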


\begin{prop} \em \label{prop:taucuptocup}
Let  G$_{\cL'}$ be a strictly self-contained Gentzen calculus.
Then $\der_{\text{G}_{\LLll}} \Psi \to \Lambda$ whenever $\der_{\text{G}_{\LLll}} \tau_\sqcup(\Psi) \to \tau_\sqcup(\Lambda)$ for every $\Psi ,\Lambda \subseteq F_{\LLll}$.
\end{prop}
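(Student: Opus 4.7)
The plan is to reduce the proposition to a per-formula Fold Lemma stating that for every $\varphi \in F_{\LLll}$ and every $\Gamma, \Delta \subseteq F_{\LLll}$, $\der_{\GLLll} \tau_\sqcup(\varphi), \Gamma \to \Delta$ implies $\der_{\GLLll} \varphi, \Gamma \to \Delta$, and symmetrically when $\tau_\sqcup(\varphi)$ sits on the right. Given this lemma, the proposition follows by iteration: starting from the hypothesised derivation of $\tau_\sqcup(\Psi) \to \tau_\sqcup(\Lambda)$, each application of the lemma replaces one occurrence of $\tau_\sqcup(\varphi)$ by $\varphi$ in the sequent, and after finitely many applications --- one per formula of $\Psi \cup \Lambda$ --- the resulting derivation witnesses $\Psi \to \Lambda$.

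The Fold Lemma itself is proved by structural induction on $\varphi$. Atomic $\varphi$'s are either fixed by $\tau_\sqcup$ (when $\varphi$ is a host propositional symbol, a host $0$-ary constructor, or a $\tau$-identified source atom) or handled by a single application of $(\text{L}_{P_{\cL''}})$, $(\text{R}_{P_{\cL''}})$, or the nullary version of $(\text{L}_{c''})/(\text{R}_{c''})$. For the inductive step $\varphi = c(\varphi_1, \dots, \varphi_n)$ with $n \geq 1$ and $c \in C_{\cL''\,n} \setminus C_{\cL'\,n}$, instantiating the rule $(\text{L}_{c})$ with $\beta_i := \varphi_i$ reduces the goal to deriving $\hat\tau(c)(\varphi_1, \dots, \varphi_n), \Gamma \to \Delta$ from $\hat\tau(c)(\tau_\sqcup(\varphi_1), \dots, \tau_\sqcup(\varphi_n)), \Gamma \to \Delta$; when $c \in C_{\cL'\,n}$, the analogous reduction keeps $c$ as the outermost operator in place of $\hat\tau(c)$. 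In both subcases the residual task is to swap each $\tau_\sqcup(\varphi_i)$ for $\varphi_i$ at the designated leaves of an $F_{\cL'}$-built context surrounding them.

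I expect this in-context swap to be the main obstacle, and I would handle it with an auxiliary Substitution Lemma: for any context $\psi[\alpha_1, \dots, \alpha_n]$ built from constructors of $\cL'$ over placeholders, and any $\Gamma, \Delta \subseteq F_{\LLll}$, if $\der_{\GLLll} \psi[\alpha := \tau_\sqcup(\varphi)], \Gamma \to \Delta$ then $\der_{\GLLll} \psi[\alpha := \varphi], \Gamma \to \Delta$, under the outer induction hypothesis that the Fold Lemma holds for each $\varphi_i$. The Substitution Lemma is itself proved by induction on the length of the derivation of its hypothesis, case-splitting on the last rule used. Axioms are handled directly, with the injectivity of $\tau_\sqcup$ recorded in Proposition~\ref{prop:sqcuptau} taking care of the situation in which the principal atom comes from a hole position. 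Rules acting on a formula of $\Gamma$ or $\Delta$ are simply reapplied after invoking the inner IH on their premises. Rules acting on $\psi[\alpha := \tau_\sqcup(\varphi)]$ itself must be host rules of $\GLl$ --- since the top of a non-hole $\psi$ is a host constructor --- and here strict self-containment is decisive: the premises contain only the immediate sub-contexts $\psi_i[\alpha := \tau_\sqcup(\varphi)]$ together with sub-multisets of $\Gamma, \Delta$, so the inner IH applies to each premise and the same rule, reinstantiated with $\beta_i := \psi_i[\alpha := \varphi]$, closes the conclusion. Finally, the case $\psi = \alpha_j$ reduces the hypothesis to $\der \tau_\sqcup(\varphi_j), \Gamma \to \Delta$ and is dispatched by the outer IH on $\varphi_j$.
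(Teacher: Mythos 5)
Your overall strategy---fold the sequent one formula at a time via a per-formula lemma, and push the folding through the host-built context $\hat\tau(c)(\cdot,\dots,\cdot)$ by an induction on the derivation that exploits strict self-containment, the injectivity of $\tau_\sqcup$, and the rules $(\text{L}_{c''})/(\text{R}_{c''})$---uses the same ingredients as the paper's proof. But your Substitution Lemma, as you state it, has a genuine gap. You formulate it for a \emph{single} designated occurrence $\psi[\vec\alpha:=\tau_\sqcup(\vec\varphi)]$ in the sequent and prove it by induction on the length of the derivation of the hypothesis. Now consider a host rule with two active subformulas in one premise, e.g.\ $(\text{L}_{\lconj})$ with premise $\beta_1,\beta_2,\Gamma\to\Delta$, acting on $\psi=\psi_1\lconj\psi_2$: the premise then contains \emph{both} $\psi_1[\vec\alpha:=\tau_\sqcup(\vec\varphi)]$ and $\psi_2[\vec\alpha:=\tau_\sqcup(\vec\varphi)]$, and both must be folded before the rule can be reapplied with $\beta_i:=\psi_i[\vec\alpha:=\vec\varphi]$. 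Your inner induction hypothesis folds only one of them; after that application you hold a derivation of uncontrolled length, so the length-based IH is no longer available to fold the second occurrence. The same obstruction recurs for every rule with more than one $\beta_i$ in a premise, and also when a premise retains the side occurrence $\psi[\vec\alpha:=\tau_\sqcup(\vec\varphi)]$ while introducing new occurrences from another principal formula. The repair is standard but must be made explicit: state and prove the lemma for \emph{simultaneous} folding of a whole multiset of designated context occurrences on both sides of the sequent; strict self-containment then guarantees that each premise is again an instance of the strengthened statement with a strictly shorter derivation, and the induction closes.

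Once that repair is made, your argument essentially collapses into the paper's. The paper runs a single induction on the length of the given derivation of $\tau_\sqcup(\Psi)\to\tau_\sqcup(\Lambda)$, with the self-propagating invariant that the sequent lies in the image of $\tau_\sqcup$: axioms are handled by injectivity (Proposition~\ref{prop:sqcuptau}); a strictly self-contained host rule applied to a sequent of $\tau_\sqcup$-images has premises that are again sequents of $\tau_\sqcup$-images (because host-constructor combinations of $\tau_\sqcup$-images are $\tau_\sqcup$-images), so the IH folds every premise to its unique preimage at once; and one closing application of $(\text{L}_{c''})/(\text{R}_{c''})$ (or $(\text{L}_{P_{\cL''}})/(\text{R}_{P_{\cL''}})$) is appended whenever the preimage of the principal formula has a source constructor or source atom at its root. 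This single induction absorbs both your outer structural induction on $\varphi$ and your top-level iteration over $\Psi\cup\Lambda$, which become unnecessary scaffolding. Your decomposition is salvageable, but only after the simultaneous strengthening; as written, the key inductive step fails.
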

\begin{proof}
Let $\Psi_1 \to \Lambda_1 \dots \Psi_n \to \Lambda_n$ be a derivation for 
$\tau_\sqcup(\Psi) \to \tau_\sqcup(\Lambda)$ in G$_\LLll$.
The proof follows by induction on $n$.\\[1mm]
(Basis) $n=1$. Then $\Psi_1 \to \Lambda_1$ is an axiom of G$_{\cL'}$. There are four cases to consider.\\[1mm]
 $(\Ax)$ There are $\varphi_1,\varphi_2 \in F_{\LLll}$
such that $\tau_\sqcup(\varphi_1) =\tau_\sqcup(\varphi_2) \in \tau_\sqcup(\Psi) \cap \tau_\sqcup(\Lambda)$ is a propositional symbol. Therefore $\varphi_1$ and $\varphi_2$ are propositional symbols by definition of $\tau_\sqcup$. Moreover, by injectivity, $\varphi_1=\varphi_2$ and  $\varphi_1 \in \Psi \cap \Lambda$. So
$\Psi \to \Lambda$ is also $(\Ax)$.\\[1mm]
$(\Ax_{c'_1})$ There are $\varphi_1,\varphi_2 \in F_{\LLll}$ such that 
$\tau_\sqcup(\varphi_1)=\tau_\sqcup(\varphi_2)=c'_1(p') \in \tau_\sqcup(\Psi) \cap \tau_\sqcup(\Lambda)$. Furthermore $\tau_\sqcup(c'_1(p'))=c'_1(p')$. So by injectivity we have
$\varphi_1=\varphi_2=c'_1(p') \in \Psi \cap\Lambda$. Thus $\Psi \to \Lambda$ is $(\Ax_{c'_1})$.\\[1mm]
$(\Ax_{\to c_1})$ There are $\varphi_1,\varphi_2 \in F_{\LLll}$ such that
$\tau_\sqcup(\varphi_1) =p'$, $\tau_\sqcup(\varphi_2)=c'_1(p')$ are in $\tau_\sqcup(\Lambda)$. 
So $\varphi_1=p'$ and $\varphi_2=c'_1(p')$ are in $\Lambda$ by injectivity of $\tau_\sqcup$. Thus $\Psi \to \Lambda$ is an instance of $(\Ax_{\to c_1})$.\\[1mm]
$(\Ax_\bot)$ $\bot \in \tau_\sqcup(\Psi)$. Then there is $c_0 \in \Psi$ such that $\tau_\sqcup(c_0)= \bot$. 
On the other hand $\tau_\sqcup(\bot)= \bot$. So, by injectivity of $\tau_\sqcup$,  
$c_0$ is $\bot$. Hence $\Psi \to \Lambda$ is an instance of $(\Ax_{\bot})$.\\[2mm]
(Step) $\tau_\sqcup(\Psi) \to \tau_\sqcup(\Lambda)$ is the conclusion of a rule $r'$ in G$_{\cL'}$ with premises
in $i_1,\dots,i_k \in \{2,\dots,n\}$. Assume without loss of generality that $r'$ was applied 
to  $\tau_\sqcup(c(\varphi_1,\dots,\varphi_n))$ on the right hand side of the sequent.
Thus the conclusion of $r'$ is a sequent  of the form $\tau_\sqcup(\Psi) \to \tau_\sqcup(\Lambda^-),\tau_\sqcup(c(\varphi_1,\dots,\varphi_n))$. 
There are several cases to consider.\\[1mm]
(1) Suppose that $c \in C_{\cL' \, n}$. Then $\tau_\sqcup(c(\varphi_1,\dots,\varphi_n))$ is
$c(\tau_\sqcup(\varphi_1),\dots,\tau_\sqcup(\varphi_n))$. 
Therefore each premise $\Psi_{i_j} \to \Lambda_{i_j}$ is of the form
$\tau_\sqcup(\Gamma_{i_j}) \to \tau_\sqcup(\Delta_{i_j})$ since $r'$ is strictly self-contained. So by the induction hypothesis $\der_{\text{G}_\LLll} \Gamma_{i_j} \to \Delta_{i_j}$ for $j=1,\dots,k$. 
Hence we can apply $r'$ to conclude $\der_{\text{G}_\LLll} \Psi \to \Lambda^-,c(\varphi_1,\dots,\varphi_n)$.\\[1mm]
(2) Assume that  $c \in C_{\cL'' \, n}\setminus C_{\cL' \, n}$. Then $\tau_\sqcup(c(\varphi_1,\dots,\varphi_n))$ is
$\hat\tau(c)(\tau_\sqcup(\varphi_1),\dots,\tau_\sqcup(\varphi_n))$. Consider two cases.\\[1mm]
(a) Rule $r'$ is applied to the first constructor $c'$ of $\hat\tau(c)$.
Thus each premise $\Psi_{i_j} \to \Lambda_{i_j}$ is of the form
$\tau_\sqcup(\Gamma_{i_j}) \to \tau_\sqcup(\Delta_{i_j})$ since $r'$ is strictly self-contained. Therefore by the induction hypothesis $\der_{\text{G}_\LLll} \Gamma_{i_j} \to \Delta_{i_j}$ for $j=1,\dots,k$. 
Hence we can apply $r'$ and obtain $\der_{\text{G}_\LLll} \Psi \to \Lambda^-,\hat \tau(c)(\varphi_1,\dots,\varphi_n)$. So by using rule R$_{c}$ we conclude $\der_{\text{G}_\LLll} \Psi \to \Lambda^-,c(\varphi_1,\dots,\varphi_n)$.\\[1mm]
(b) Rule $r'$ is applied to the first two constructors $c'_1c'$ of $\hat\tau(c)$. Thus each premise $\Psi_{i_j} \to \Lambda_{i_j}$ is of the form
$\tau_\sqcup(\Gamma_{i_j}) \to \tau_\sqcup(\Delta_{i_j})$ since $r'$ is strictly self-contained. Therefore by the induction hypothesis $\der_{\text{G}_\LLll} \Gamma_{i_j} \to \Delta_{i_j}$ for $j=1,\dots,k$. 
Hence we can apply $r'$ and obtain $\der_{\text{G}_\LLll} \Psi \to \Lambda^-,\hat \tau(c)(\varphi_1,\dots,\varphi_n)$. So by  rule R$_{c}$ we conclude $\der_{\text{G}_\LLll} \Psi \to \Lambda^-,c(\varphi_1,\dots,\varphi_n)$.
\end{proof} 

Conservativity with respect to $\cL'$ of the coexistent combination  is a corollary of the next result taking into account Proposition~\ref{prop:sqcuptau}. 

\begin{prop} \em \label{prop:backandforthderivationcombtol}
Let $\Psi \cup \Lambda \subseteq F_{\LLll}$. Suppose that G$_{\cL'}$ is strictly self-contained. Then
$$\der_{\text{G}_\LLll} \Psi \to \Lambda \quad \text{if and only if} 
      \quad \der_{\text{G}_{\cL'}} \tau_\sqcup(\Psi) \to \tau_\sqcup(\Lambda).$$
\end{prop}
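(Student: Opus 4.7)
The ``if'' direction follows immediately from the results already established. Given a derivation of $\tau_\sqcup(\Psi) \to \tau_\sqcup(\Lambda)$ in $\text{G}_{\cL'}$, Proposition~\ref{prop:derextensiveLlinhaseq} lifts it verbatim to a derivation in $\text{G}_\LLll$, and then Proposition~\ref{prop:taucuptocup} yields $\der_{\text{G}_\LLll} \Psi \to \Lambda$.

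For the ``only if'' direction, I would proceed by induction on the length $n$ of a derivation of $\Psi \to \Lambda$ in $\text{G}_\LLll$, showing that $\tau_\sqcup(\Psi) \to \tau_\sqcup(\Lambda)$ is derivable in $\text{G}_{\cL'}$. The central observation used throughout is that whenever $c'' \in C_{\cL''\,n} \setminus C_{\cL'\,n}$ the formula $\hat\tau(c'')(\beta_1,\dots,\beta_n) \in F_\LLll$ is built exclusively from $\cL'$-constructors with the $\beta_i$ as subterms, so by the definition of $\tau_\sqcup$ one has
$$\tau_\sqcup(\hat\tau(c'')(\beta_1,\dots,\beta_n)) \;=\; \hat\tau(c'')(\tau_\sqcup(\beta_1),\dots,\tau_\sqcup(\beta_n)) \;=\; \tau_\sqcup(c''(\beta_1,\dots,\beta_n)),$$
and analogously for the propositional-symbol rules $\text{L}_{P_{\cL''}}$ and $\text{R}_{P_{\cL''}}$.

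In the base case, an axiom of $\text{G}_\LLll$ is by construction an axiom of $\text{G}_{\cL'}$; since $\tau_\sqcup$ is injective and fixes each element of $C_{\cL'\,0} \cup P_{\cL'}$, the translated sequent is again an instance of the same axiom. For the inductive step, split on the last rule used. If it is a rule of $\text{G}_{\cL'}$ with principal formula $c'(\beta_1,\dots,\beta_n)$, then strict self-containedness guarantees that its premises mention only $\beta_1,\dots,\beta_n$ together with sub-multisets of $\Gamma$ and $\Delta$; applying $\tau_\sqcup$ componentwise turns them into premises of the same rule of $\text{G}_{\cL'}$ whose principal formula is $c'(\tau_\sqcup(\beta_1),\dots,\tau_\sqcup(\beta_n))$, and the induction hypothesis supplies derivations of the translated premises in $\text{G}_{\cL'}$. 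If instead the last rule is one of $\text{L}_{c''}$, $\text{R}_{c''}$, $\text{L}_{P_{\cL''}}$, $\text{R}_{P_{\cL''}}$ for a non-common symbol, then by the central observation above both premise and conclusion have the same $\tau_\sqcup$-image, so the induction hypothesis applied to the (single) premise already delivers the required derivation.

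The main obstacle is verifying that in the inductive step for rules inherited from $\text{G}_{\cL'}$ the translated premises and conclusion really form an instance of the same rule; this is exactly where strict self-containedness is indispensable, since otherwise the premises could mention formulas whose $\tau_\sqcup$-images are not of the shape required by the rule scheme. Once that identity of rule-instances is in hand, the remainder of the argument is routine bookkeeping.
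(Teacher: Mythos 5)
Your proposal is correct and follows essentially the same route as the paper: the backward direction is obtained by combining Proposition~\ref{prop:derextensiveLlinhaseq} with Proposition~\ref{prop:taucuptocup}, and the forward direction is the same induction on derivation length, splitting on whether the last rule is inherited from $\text{G}_{\cL'}$ or is one of the added rules for non-common symbols, using the identity $\tau_\sqcup(\hat\tau(c'')(\beta_1,\dots,\beta_n)) = \tau_\sqcup(c''(\beta_1,\dots,\beta_n))$. Your explicit remark on why strict self-containedness guarantees that the translated premises form an instance of the same $\text{G}_{\cL'}$-rule is a slightly more careful articulation of a point the paper leaves implicit, but it is not a different argument.
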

\begin{proof}\ \\
$(\to)$ Let $\Psi_1 \to \Lambda_1 \dots \Psi_n \to \Lambda_n$ be a derivation for $\Psi \to \Lambda$ in G$_\LLll$.
The proof follows by induction on $n$.\\[1mm]
(Basis) $n=1$. Then $\Psi_1 \to \Lambda_1$ is  an axiom in G$_\LLll$. 
There are several cases to consider but they are shown in a similar way and so we just prove 
$(\Ax)$.  In this case there is $p' \in P_{\cL'}$ such that $p' \in \Psi_1 \cap \Lambda_1$. Hence
$p' \in \tau_\sqcup(\Psi_1) \cap \tau_\sqcup(\Lambda_1)$ since $\tau_\sqcup(p')=p'$.
So $\tau_\sqcup(\Psi_1) \to \tau_\sqcup(\Lambda_1)$ is an axiom $(\Ax)$.\\[1mm]
(Step) There are several cases to consider.\\[1mm]
(1) $\Psi_1 \to \Lambda_1$ is the conclusion in $F_\LLll$ of a rule of G$_{\cL'}$ 
with premises $\Psi_{i_1} \to \Lambda_{i_1} \dots \Psi_{i_m} \to \Lambda_{i_m}$. Observe that the main constructor of the principal formula to which the rule is applied is in $C_{\cL'}$. By the induction
hypothesis $\der_{\text{G}_{\cL'}} \tau_\sqcup(\Psi_{i_j}) \to \tau_\sqcup(\Lambda_{i_j})$ for $j=1,\dots,m$.
Hence the thesis follows applying the same rule. In a similar way we can prove the case when
the rule involves two constructors. \\[1mm]
(2) $\Psi_1 \to \Lambda_1,c''(\varphi_1,\dots,\varphi_n)$ is the conclusion  in $F_\LLll$ of the rule R$_{c''}$
for some $c'' \in C_{\cL''\,n} \setminus C_{\cL'\,n}$. Then by the induction hypothesis
$$\der_{\text{G}_{\cL'}}\tau_\sqcup(\Psi_1) \to \tau_\sqcup(\Lambda_1),\tau_\sqcup(\hat \tau(c'')(\varphi_1,\dots,\varphi_n)).$$ Since by definition of $\tau_\sqcup$ 
$$\tau_\sqcup(\hat\tau(c'')(\varphi_1,\dots,\varphi_n)) = \hat\tau(c'')(\tau_\sqcup(\varphi_1),\dots,\tau_\sqcup(\varphi_n)) = \tau_\sqcup(c''(\varphi_1,\dots,\varphi_n))$$
the thesis follows immediately.\\[1mm]
(3) We omit the proof  when the first sequent in the derivation is the conclusion of  L$_{c''}$ or L$_{P_{\cL''}}$ or R$_{P_{\cL''}}$ since it is similar to the one in (2).\\[1mm]
$(\from)$ Assume that $\der_{\text{G}_{\cL'}} \tau_\sqcup(\Psi) \to \tau_\sqcup(\Lambda)$. Then, by Proposition~\ref{prop:derextensiveLlinhaseq}, $\der_{\text{G}_{\LLll}} \tau_\sqcup(\Psi) \to \tau_\sqcup(\Lambda)$. Thus, by Proposition~\ref{prop:taucuptocup}, we conclude $\der_{\text{G}_\LLll} \Psi \to \Lambda$.
\end{proof}

The next result shows that the coexistent combination $\LLll$ is conservative for $\cL'$ whenever $\cL'$ has
a strictly self-contained Gentzen calculus. Hence $\cL'$ keeps its characteristics in the coexistent combination.

\begin{corollary} \label{cor:consllinha} \em 
Assume that $\cL'$ is endowed with a strictly self-contained Gentzen calculus.
Let $\varphi' \in  F_{\cL'}$ be such that  $\der_{\LLll} \varphi'$.
Then $\der_{\cL'} \varphi'$.
\end{corollary}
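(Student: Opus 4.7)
The plan is to reduce the statement to the backward direction of Proposition~\ref{prop:backandforthderivationcombtol} together with the identity part of Proposition~\ref{prop:sqcuptau}, so essentially no new work is required — all the machinery is already in place.

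First, I would unfold the definition of theoremhood: $\der_\LLll \varphi'$ means $\der_{\text{G}_\LLll} \; \to \varphi'$, i.e.\ the empty-left sequent with $\varphi'$ on the right is derivable in the Gentzen calculus for the coexistent combination. Since $\cL'$ is assumed to be endowed with a strictly self-contained Gentzen calculus, the hypothesis of Proposition~\ref{prop:backandforthderivationcombtol} applies with $\Psi = \emptyset$ and $\Lambda = \{\varphi'\}$. Applying the $(\to)$ direction of that proposition yields $\der_{\text{G}_{\cL'}} \tau_\sqcup(\emptyset) \to \tau_\sqcup(\{\varphi'\})$, that is, $\der_{\text{G}_{\cL'}} \; \to \tau_\sqcup(\varphi')$.

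Next, I would invoke Proposition~\ref{prop:sqcuptau}, whose second clause tells us that $\tau_\sqcup(\varphi') = \varphi'$ for any $\varphi' \in F_{\cL'}$. Substituting this equality into the sequent derived in the previous step gives $\der_{\text{G}_{\cL'}} \; \to \varphi'$, which by definition is exactly $\der_{\cL'} \varphi'$, completing the proof.

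There is no real obstacle here since the heavy lifting — namely showing that every derivation in $\text{G}_{\LLll}$ of a sequent made of translated formulas descends to a derivation in $\text{G}_{\cL'}$ — has already been carried out inside Proposition~\ref{prop:backandforthderivationcombtol} (using strict self-containment to ensure that the premises of any rule applied to a translated principal formula are themselves translations). The corollary is simply the specialization of that equivalence to the case of a theorem whose translation coincides with itself, so the entire argument is a two-line chain of implications.
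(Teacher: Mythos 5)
Your proof is correct and matches the paper's intended argument exactly: the corollary is stated immediately after Proposition~\ref{prop:backandforthderivationcombtol} with the remark that it follows from that result ``taking into account Proposition~\ref{prop:sqcuptau},'' which is precisely your two-step chain of applying the forward direction with $\Psi=\emptyset$, $\Lambda=\{\varphi'\}$ and then using $\tau_\sqcup(\varphi')=\varphi'$.
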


\begin{example} \em \label{ex:PLJcet}
The coexistent combination $\PLJ$ of $\PL$ and $\It$ is a conservative extension of $\It$ by Corollaries~\ref{prop:derextensiveLlinha} and~\ref{cor:consllinha}
 (see Example~\ref{ex:GPLJ}).
\curtains
\end{example}
 
\begin{example} \em \label{ex:JS4cet}
The coexistent combination $\JS$ of $\It$ and $\Sfour$ is a conservative extension of $\Sfour$ by Corollaries~\ref{prop:derextensiveLlinha} and~\ref{cor:consllinha}
 (see Example~\ref{ex:GJS4}). 
\curtains
\end{example}

\begin{example} \em \label{ex:PLJ3cet}
The coexistent combination $\CJ$ of $\PL$ and $\J_3$ is a conservative extension of $\J_3$ by Corollaries~\ref{prop:derextensiveLlinha} and~\ref{cor:consllinha}
 (see Example~\ref{ex:PLJ3G}).
\curtains
\end{example}

\section{Conservativity and extensivity for $\cL''$} \label{sec:constrans}

Now we want to address extensivity and conservativity of the coexistent combination with respect to $\cL''$. 
 We need to strengthen the notion of translation so that  $\cL''$  keeps its inherent characteristics in the combination.~A conservative translation should  ensure that satisfaction of formulas is preserved and reflected. For this purpose a 
conservative translation includes maps relating  models and satisfaction of each logic.  

We assume that a logic $\cL$ is endowed with a class $\cM_\cL$ of \emph{models}
and a \emph{satisfaction relation} $\sat_\cL \; \subseteq \cM_\cL \times F_\cL$ stating the satisfaction of a formula by a model. 

A \emph{conservative translation} from logic $\cL''$ to logic $\cL'$ is a triple
$$(\hat \tau,\vec\tau,\cev\tau)$$
where $\hat \tau$ is a constructor translation, and $\vec \tau: \cM_{\cL''} \to \cM_{\cL'}$ and $\cev \tau:  \cM_{\cL'}\to \cM_{\cL''}$ are maps 
such that 
$$M'' \sat_{\cL''} \varphi'' \text{ whenever } \vec \tau(M'') \sat_{\cL'}\tau(\varphi'')$$
and
$$\cev \tau(M') \sat_{\cL''} \varphi''  \text{ implies } M' \sat_{\cL'} \tau(\varphi'')$$
for every $M'' \in  \cM_{\cL''}$, $M' \in  \cM_{\cL'}$ and $\varphi'' \in F_{\cL''}$.

We now present several examples of conservative translations that we use to define particular combinations. 

The first example considers logics with a matrix semantics. 
 A \emph{matrix}
is a pair $(A,D)$ where $A$ is a non-empty set of \emph{truth values} and $D\subseteq A$ is a non-empty set of \emph{distinguished values}. A \emph{model} for a logic $\cL$ with a matrix semantics is
a pair $M=((A,D),V)$ where 
$(A,D)$ is a matrix and $V: F_\cL \to A$ is a map assigning to each formula of $\cL$ a truth value. 
We say that $M$ \emph{satisfies} $\varphi$, written
$M \sat_\cL \varphi$ whenever $V(\varphi) \in D$.

\begin{example} \em \label{ex:CPLJ3models}
Recall the constructor translation $\hat \tau_{\PL \to \J_3}$ from $\PL$ to $\J_3$ introduced in Example~\ref{ex:PLJ3}. The unique matrix for $\PL$ is 
$$m_{\PL}=(\{0,1\}, \{1\}).$$
The class of models $\cM_{\PL}$ for $\PL$ is 
composed by all pairs $$(m_{\PL},V'')$$
where $V'': F_{\PL} \to \{0,1\}$ is a map such that 

\begin{itemize}

\item $V''({\lneg}^\PL \psi)=1-V''(\psi)$

\item $V''(\psi_1\limp^\PL \psi_2)=
\begin{cases} 
1 & \text{whenever } V''(\psi_1)\leq V''(\psi_2)\\
0 & \text{otherwise}.
\end{cases}$
\end{itemize}
The unique matrix for $\J_3$ is 
$$m_{\J_3}=(\{0,\frac12,1\}, \{\frac12,1\}).$$
The class of models $\cM_{\J_3}$ for $\J_3$ (see~\cite{ott:00}) is 
composed by all pairs $$(m_{\J_3},V')$$
such that $V': F_{\J_3} \to \{0,\frac12,1\}$ is a map such that 

\begin{itemize}

\item $V'({\sim}^{\J_3} \psi)=1-V'(\psi)$

\item $V'(\psi_1\limp^{\J_3} \psi_2)=
\begin{cases}
1 & \text{if } V'(\psi_1)\leq V'(\psi_2)\\
\frac12 & \text{if } (V'(\psi_1)=\frac12 \,\text{and} \, V'(\psi_2)=0)
\, \text{or} \, (V'(\psi_1)=1 \, \text{and} \,V'(\psi_2)=\frac12)\\
0 & \text{if } V'(\psi_1)=1 \; \text{and} \; V'(\psi_2)=0
\end{cases}$

\item $V'(\psi_1\lconj^{\J_3} \psi_2)= \min\{V'(\psi_1),V'(\psi_2)\}$

\item $V'(\psi_1\ldisj^{\J_3} \psi_2)= \max\{V'(\psi_1),V'(\psi_2)\}$.

\end{itemize}
We define $\vec \tau_{\PL \to \J_3}: \cM_\PL \to \cM_{\J_3}$ as follows:
$$\vec \tau_{\PL \to \J_3}(m_{\PL},V'')=(m_{\J_3},V')$$
where $V'(p)=V''(p)$.
Then for every $\varphi \in F_\PL$
$$(m_{\PL},V'') \sat_{\PL} \varphi \text{ if and only if } (m_{\J_3},V')\sat_{\J_3}  \tau_{\PL \to \J_3}(\varphi).$$
Intuitively the result holds since the restriction of the constructors of $\J_3$ to classical truth-values coincides with their classical constructor counterparts. \\[1mm]
On the other hand, we define $\cev \tau_{\PL \to \J_3}: \cM_{\J_3} \to \cM_{\PL}$ as follows:
$$\cev \tau_{\PL \to \J_3}(m_{\J_3},V')=(m_{\PL},V'')$$
where 
$$V''(p)=
\begin{cases}
1 & \text{if } V'(p) \in \{\frac12,1\}\\[1mm]
0 & \text{otherwise}.
\end{cases}
$$
Then 
$$(m_{\PL},V'') \sat_{\PL} \varphi \text{ if and only if } (m_{\J_3},V')\sat_{\J_3}  \tau_{\PL \to \J_3}(\varphi)$$
for every $\varphi \in F_\PL$ as can be proved by induction on $\varphi$. 
 \curtains
\end{example} 

The next example states that the G\"odel-McKinsey-Tarski translation is conservative. 

\begin{example} \em \label{ex:JS4models}
Recall the constructor translation $\hat \tau_{\It \to \Sfour}$ from $\It$ to $\Sfour$ introduced in Example~\ref{ex:JS4tauhat}. Details of  maps $\vec\tau_{\It \to \Sfour}$ and $\cev\tau_{\It \to \Sfour}$ (although not named in this way) 
are presented in~\cite{ryb:97}. These maps together with $\hat \tau_{\It \to \Sfour}$ constitute a  conservative translation  from intuitionistic logic $\It$ to modal logic $\Sfour$ (see~\cite{ryb:97}).\curtains
\end{example}

\begin{example} \em \label{ex:PLJmodels}
Recall the constructor  translation map $\hat \tau_{\PL\to \It}$ from $\PL$ to $\It$ introduced in Example~\ref{ex:PLJtauhat}. Let $\cM_\PL$ be the class composed of all pairs with a Boolean algebra and a valuation over that algebra and $\cM_\It$  the class composed of all pairs with a Heyting algebra and a valuation over that algebra. Observe that $\cM_\PL \subseteq \cM_\It$. 
Let $\vec\tau_{\PL \to \It}$ be the map assigning to each Boolean algebra and valuation the same pair
and  $\cev\tau_{\PL \to \It}$  the map that associates to each Heyting algebra and valuation the Boolean algebra of its regular elements and the restriction of the valuation to that set (presented in~\cite{joh:86}). These maps together with $\hat \tau_{\PL \to \It}$ constitute a  conservative translation  from propositional logic $\PL$ to intuitionistic logic $\It$ (see~\cite{joh:86}).\curtains
\end{example}

 Towards discussing conservativity and extensivity for $\cL''$ 
we set the class of {models} $\cM_\LLll$  of $\LLll$  to be  
$$\cM_{\cL'}$$
Moreover, the {satisfaction relation} $\sat_{\LLll}$ is defined by 

$$M' \sat_{\LLll} \varphi \text{ if and only if } M' \sat_{\cL'} \tau_\sqcup(\varphi)$$
 for every $\varphi \in F_{\LLll}$.   

We are ready to prove that the  coexistent combination $\LLll$ is  conservative with respect to $\cL''$ under a conservative translation.
 
 \begin{prop} \em \label{prop:conservativel''}
Let $(\hat \tau,\vec \tau,\cev \tau)$ be a conservative translation and $\varphi'' \in F_{\cL''}$. 
Then  
$\ent_{\LLll} \varphi''$ implies $\ent_{\cL''} \varphi''$. 
\end{prop}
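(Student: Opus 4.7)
The plan is to chain together a few unfoldings of definitions and then invoke one direction of the conservativity clause for the translation. First I would unfold the hypothesis $\ent_{\LLll} \varphi''$: by the definition of $\sat_{\LLll}$ introduced just before the statement, and by $\cM_{\LLll} = \cM_{\cL'}$, this amounts to saying that $M' \sat_{\cL'} \tau_\sqcup(\varphi'')$ for every $M' \in \cM_{\cL'}$. Since $\varphi'' \in F_{\cL''}$, Proposition~\ref{prop:sqcuptau} gives $\tau_\sqcup(\varphi'') = \tau(\varphi'')$, so the hypothesis simplifies to: $M' \sat_{\cL'} \tau(\varphi'')$ for every $M' \in \cM_{\cL'}$.

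Next I would verify $\ent_{\cL''} \varphi''$ pointwise. Fix an arbitrary $M'' \in \cM_{\cL''}$ and consider the translated model $\vec \tau(M'') \in \cM_{\cL'}$. Applying the previous step to $M' := \vec\tau(M'')$ gives $\vec\tau(M'') \sat_{\cL'} \tau(\varphi'')$. At this point I would invoke the first clause of the definition of a conservative translation, namely that $M'' \sat_{\cL''} \varphi''$ whenever $\vec\tau(M'') \sat_{\cL'} \tau(\varphi'')$, to conclude $M'' \sat_{\cL''} \varphi''$. Since $M''$ was arbitrary, this yields $\ent_{\cL''} \varphi''$.

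There is no genuine obstacle here: the argument is a pure unfolding of the definitions together with one direct application of the $\vec\tau$-clause of the conservative translation. The $\cev\tau$-clause is not needed for this direction; it will presumably be the relevant ingredient for the dual (extensivity) statement for $\cL''$, where starting from an $\cL'$-model one must produce an $\cL''$-model witnessing failure of the formula.
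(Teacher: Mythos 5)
Your proposal is correct and follows essentially the same route as the paper's proof: specialize the hypothesis to the model $\vec\tau(M'')$, use Proposition~\ref{prop:sqcuptau} to pass from $\tau_\sqcup(\varphi'')$ to $\tau(\varphi'')$, and then apply the $\vec\tau$-clause of the conservative translation. Your closing remark is also accurate — the paper indeed reserves the $\cev\tau$-clause for the extensivity result (Proposition~\ref{prop:extension''}).
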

\begin{proof}
Assume that $\ent_{\LLll} \varphi''$. 
Let $M'' \in \cM_{\cL''}$. Then $\vec \tau(M'') \sat_{\LLll} \varphi''$. 
Therefore $\vec \tau(M'') \sat_{\cL'} \tau_\sqcup(\varphi'')$ by definition of $\sat_{\LLll}$ and so $\vec \tau(M'') \sat_{\cL'} \tau(\varphi'')$ by Proposition~\ref{prop:sqcuptau}. Hence $M'' \sat_{\cL''} \varphi''$ since $(\hat \tau,\vec \tau,\cev \tau)$ is a conservative translation. 
\end{proof}

Finally we prove that the coexistent combination $\LLll$ is extensive with respect to $\cL''$.

 \begin{prop} \em \label{prop:extension''}
Let $(\hat \tau,\vec \tau,\cev \tau)$ be a conservative translation and $\varphi'' \in F_{\cL''}$. 
Then 
 $\ent_{\cL''} \varphi''$ implies $\ent_{\LLll} \varphi''$.
\end{prop}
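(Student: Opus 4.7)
The plan is to mirror the structure of the proof of Proposition~\ref{prop:conservativel''}, but use the second clause in the definition of a conservative translation (the one involving $\cev\tau$) instead of the first. The reasoning is essentially a chase through the definitions: start from an arbitrary model of the combined logic, pull it back to a model of $\cL''$ via $\cev\tau$, apply the hypothesis $\ent_{\cL''}\varphi''$ there, and push the satisfaction back up through the translation.

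More concretely, I would assume $\ent_{\cL''}\varphi''$ and pick an arbitrary $M'\in\cM_{\LLll}=\cM_{\cL'}$; the goal is to show $M'\sat_{\LLll}\varphi''$, which by definition of $\sat_{\LLll}$ amounts to $M'\sat_{\cL'}\tau_\sqcup(\varphi'')$. First I would apply the hypothesis to the model $\cev\tau(M'')\in\cM_{\cL''}$, getting $\cev\tau(M')\sat_{\cL''}\varphi''$. Then the defining property of a conservative translation (the clause $\cev\tau(M')\sat_{\cL''}\varphi''$ implies $M'\sat_{\cL'}\tau(\varphi'')$) yields $M'\sat_{\cL'}\tau(\varphi'')$. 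Finally, since $\varphi''\in F_{\cL''}$, Proposition~\ref{prop:sqcuptau} gives $\tau(\varphi'')=\tau_\sqcup(\varphi'')$, so $M'\sat_{\cL'}\tau_\sqcup(\varphi'')$, i.e.\ $M'\sat_{\LLll}\varphi''$. Since $M'$ was arbitrary, $\ent_{\LLll}\varphi''$ follows.

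There is essentially no obstacle here: the statement is really just the contrapositive pattern of Proposition~\ref{prop:conservativel''}, and every ingredient (the definition of $\sat_{\LLll}$ via $\tau_\sqcup$, the agreement of $\tau_\sqcup$ with $\tau$ on $F_{\cL''}$, and the two-sided nature of a conservative translation) is already available. The only point worth double-checking is that the right arrow of the conservative translation condition is the one that supplies exactly the implication we need, and that we are not inadvertently using something stronger than the hypothesis $\ent_{\cL''}\varphi''$ affords (we are not: we only evaluate $\varphi''$ at the single model $\cev\tau(M')$, which lies in $\cM_{\cL''}$ by definition of $\cev\tau$).
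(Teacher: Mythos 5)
Your proof is correct and follows exactly the same route as the paper's: pull $M'$ back via $\cev\tau$, apply the hypothesis there, use the second clause of the conservative-translation definition to transfer satisfaction of $\tau(\varphi'')$ to $M'$, and identify $\tau(\varphi'')$ with $\tau_\sqcup(\varphi'')$ via Proposition~\ref{prop:sqcuptau}. (The lone ``$\cev\tau(M'')$'' is a typo for $\cev\tau(M')$, which you already use correctly in the very next clause.)
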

\begin{proof} Suppose that $\ent_{\cL''} \varphi''$ and let $M' \in \cM_{\LLll}$. Then
$M' \in \cM_{\cL'}$, $\cev \tau(M') \in \cM_{\cL''}$ and
$\cev \tau(M') \sat_{\cL''} \varphi''$. Thus,  $M' \sat_{\cL'} \tau(\varphi'')$ since $(\hat \tau,\vec \tau,\cev \tau)$ is a conservative translation.
Hence $M' \sat_{\cL'} \tau_\sqcup(\varphi'')$ by Proposition~\ref{prop:sqcuptau}. So $M' \sat_{\LLll} \varphi''$ by definition of $\sat_{\LLll}$.
\end{proof}

\begin{example} \em \label{ex:PLJce}
Recall that
$(\hat \tau_{\PL \to \It},\vec \tau_{\PL \to \It},\cev \tau_{\PL \to \It})$ is a  conservative translation  from propositional logic $\PL$ to intuitionistic logic $\It$ (see Example~\ref{ex:PLJmodels}). So by 
Propositions~\ref{prop:conservativel''} and~\ref{prop:extension''} the coexistent combination $\PLJ$ is a conservative extension of  $\PL$. 
\curtains
\end{example}
 
\begin{example} \em \label{ex:JS4ce}
Recall that 
$(\hat \tau_{\It \to \Sfour},\vec \tau_{\It \to \Sfour},\cev \tau_{\It \to \Sfour})$ is a  conservative translation  from intuitionistic logic $\It$ to modal logic $\Sfour$ (see Example~\ref{ex:JS4models}). Thus  by 
Propositions~\ref{prop:conservativel''} and~\ref{prop:extension''} the coexistent combination $\JS$ is a conservative extension of $\It$. 
\curtains
\end{example}

\begin{example} \em \label{ex:PLJ3ce}
Recall that
$(\hat \tau_{\PL \to \J_3},\vec \tau_{\PL \to \J_3},\cev \tau_{\PL \to \J_3})$ is a  conservative translation  from propositional logic $\PL$ to Ja\'skowski's paraconsistent logic $\J_3$ (see Example~\ref{ex:CPLJ3models}).~Therefore  by 
Propositions~\ref{prop:conservativel''} and~\ref{prop:extension''} the coexistent combination of $\PL$ and $\J_3$ is a conservative extension of $\PL$. 
\curtains
\end{example}

From now on we only consider coexistent combinations induced by conservative translation in order to have conservativity and extensivity for both components. 
\paragraph{Collapsing Problem}\ \\[1mm]
We start by observing that in some previously proposed logical combination methods 
collapsing occurs. More precisely  in the logic resulting from the combination 
one of the components loses its properties and collapses into the other. More technically this means that the combination is not conservative. 
A well known example is the fibring combination of classical and intuitionistic logics (see~\cite{gab:96,cer:96}) 
that collapses into classical logic. For instance the intuitionistic formula
$${\lneg}^\It {\lneg}^\It  p^\It \limp^\It p^\It$$
is valid in the combination although it is not valid in intuitionistic logic. In the case of the coexistent combination $$\PLJ$$ of classical and intuitionistic logics this phenomenon does not occur because this logic is a conservative extension of $\It$ and such a formula is not valid in $\It$.

This non collapsing feature of coexistent combination always holds in the presence of a conservative translation.

\section{Soundness and completeness of the combination}\label{sec:soundcomp}

In this section we prove that the coexistent combination $\LLll$ is sound and complete under mild conditions.
We start by introducing some auxiliary notions. 

We say that an axiom is  \emph{sound} in a logic $\cL$ whenever it is valid and 
a  rule is  \emph{sound} in $\cL$ whenever for every model $M \in \cM_\cL$, if
 $M$ satisfies the premises of the rule then $M$ also satisfies the conclusion of the rule.
 Finally we say that $\cL$ is \emph{sound} whenever
 $$\der_{\cL} \varphi \qquad \text{implies} \qquad \ent_{\cL} \varphi$$
 and it is \emph{complete} 
 whenever
 $$\ent_{\cL} \varphi \qquad \text{implies} \qquad \der_{\cL} \varphi.$$

 It is convenient to extend the satisfaction relation to sets of formulas in a generic logic $\cL$ as follows:
$$M \sat_\cL \Psi$$ whenever $M \sat_\cL \psi$ for every $\psi \in \Psi$ and $M \in \cM_\cL$. 
Moreover, we extend the semantic notions to sequents. We say that $M$ \emph{satisfies} the sequent $\Psi \to \Lambda$, written 
$$M \sat_{\cL} \Psi \to \Lambda$$ whenever $M \sat_{\cL} \Psi$ implies that there is $\lambda \in \Lambda$ such that $M \sat_{\cL} \lambda$. Furthermore we say that $\Psi \to \Lambda$
is \emph{valid}, written $\ent_\cL  \Psi \to \Lambda$, whenever $M \sat_{\cL} \Psi \to \Lambda$ for every $M \in \cM_\cL$.

We start by proving that the axioms and rules of G$_\LLll$ are sound in $\LLll$
under some conditions.

\begin{prop} \em \label{prop:soundaxioms}
The axioms of G$_\LLll$ are sound in $\LLll$ whenever the axioms of G$_\cL'$
are sound in $\cL'$.
\end{prop}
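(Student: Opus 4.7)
The plan is to exploit the fact that the axiom schemes of $\GLLll$ are exactly those of $\GLl$: the construction of $\GLLll$ from $\GLl$ only adjoins the left and right rules $(\text{L}_{P_{\cL''}}), (\text{R}_{P_{\cL''}}), (\text{L}_{c''}), (\text{R}_{c''})$, all of which have premises and are therefore not axioms. The only new freedom in an axiom instance of $\GLLll$ is that the side multisets $\Gamma, \Delta$ are now drawn from $F_\LLll$ rather than $F_{\cL'}$, while the principal material $p,\,c_1(p)$ or $\bot$ still refers to a $p \in P_{\cL'}$, a $c_1 \in C_{\cL'\,1}$, or $\bot \in C_{\cL'\,0}$ (since these come from the axiom schemes of $\GLl$).

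I would then fix an axiom instance $\Psi \to \Lambda$ of $\GLLll$ and a model $M' \in \cM_\LLll = \cM_{\cL'}$, and unfold the combined semantics: since $M' \sat_\LLll \varphi$ iff $M' \sat_{\cL'} \tau_\sqcup(\varphi)$, the goal $M' \sat_\LLll \Psi \to \Lambda$ reduces to $M' \sat_{\cL'} \tau_\sqcup(\Psi) \to \tau_\sqcup(\Lambda)$. Applying $\tau_\sqcup$ pointwise to the instance, and using the definition of $\tau_\sqcup$, which acts as the identity on propositional symbols of $\cL'$ and on constructors of $\cL'$, the principal formulas $p, c_1(p), \bot$ of the axiom are unchanged, while the side multisets become $\tau_\sqcup(\Gamma), \tau_\sqcup(\Delta) \subseteq F_{\cL'}$. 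Hence $\tau_\sqcup(\Psi) \to \tau_\sqcup(\Lambda)$ is itself an instance of the very same axiom scheme of $\GLl$, and the hypothesis that the axioms of $\GLl$ are sound in $\cL'$ yields $M' \sat_{\cL'} \tau_\sqcup(\Psi) \to \tau_\sqcup(\Lambda)$, which is exactly $M' \sat_\LLll \Psi \to \Lambda$.

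The argument is uniform across the four axiom types. For $(\Ax)$ and $(\Ax_{c_1})$ the conclusion repeats a formula from the antecedent, so soundness is essentially automatic and does not genuinely require the hypothesis; however, for $(\Ax_\bot)$ one really uses soundness in $\cL'$ to know that $M' \not\sat_{\cL'} \bot$, and for $(\Ax_{\to c_1})$ one uses it to ensure that $M' \sat_{\cL'} p$ or $M' \sat_{\cL'} c_1(p)$. I foresee no real obstacle: the only minor care is to observe that the scheme restrictions ($p \in P_{\cL'}$, $c_1 \in C_{\cL'\,1}$, $\bot \in C_{\cL'\,0}$) are preserved under $\tau_\sqcup$, which is immediate from its inductive definition.
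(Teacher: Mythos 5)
Your proof is correct and follows essentially the same route as the paper: the paper's treatment of the $(\Ax_{\to c_1})$ case is exactly your argument (unfold $\sat_\LLll$ via $\tau_\sqcup$, note that $\tau_\sqcup$ fixes the principal formulas since they are built from $\cL'$-symbols, and invoke soundness of the corresponding $\GLl$-axiom), while the other three cases are dismissed there as immediate. Your packaging of all four cases into the single uniform observation that $\tau_\sqcup$ sends an axiom instance of $\GLLll$ to an instance of the same axiom scheme of $\GLl$ is a slightly tidier presentation of the same idea, and your remark that $(\Ax_\bot)$ with $\Delta=\emptyset$ genuinely needs the hypothesis is a fair point the paper glosses over.
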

\begin{proof} We have four cases to consider.\\[1mm]
$(\Ax)$ and $(\Ax_{c_1})$ These cases follow immediately by definition of satisfaction of a sequent.\\[1mm]
$(\Ax_{\to c_1})$ Let $M' \in \cM_\LLll$ and $\Gamma, \Delta \subseteq F_\LLll$. Suppose that $M'\sat_{\LLll} \Gamma$. Then
$M' \sat_{\cL'} \tau_\sqcup(\Gamma)$ by definition. Then either $M' \sat_{\cL'} \tau_\sqcup(\delta)$ for some $\delta \in \Delta$ or $M' \sat_{\cL'} \tau_\sqcup(p)$ or $M' \sat_{\cL'} \tau_\sqcup(c_1(p))$ since, by hypothesis, $(\Ax_{\to c_1})$ is sound in $\cL'$. Hence, $M' \sat_{\LLll} \delta$ for some $\delta \in \Delta$ or $M' \sat_{\LLll} p$ or $M' \sat_{\LLll} c_1(p)$.\\[1mm]
$(\Ax_\bot)$  The proof of this case follows straightforwardly.
\end{proof}

\begin{prop} \em \label{prop:soundrules}
The rules of G$_\LLll$ are sound in $\LLll$ whenever the rules of G$_{\cL'}$
are sound in $\cL'$.
\end{prop}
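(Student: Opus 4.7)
The plan is to split the rules of G$_\LLll$ into two groups and handle each separately: the rules inherited from G$_{\cL'}$ (including any L$_{c_1 c}$, R$_{c_1 c}$) and the four families of translation rules L$_{c''}$, R$_{c''}$, L$_{P_{\cL''}}$, R$_{P_{\cL''}}$ for $c'' \in C_{\cL''\,n} \setminus C_{\cL'\,n}$ and $p'' \in P_{\cL''} \setminus P_{\cL'}$. Throughout the argument, the semantics $\sat_\LLll$ is routed through $\sat_{\cL'}$ by the definition $M' \sat_\LLll \varphi \msse M' \sat_{\cL'} \tau_\sqcup(\varphi)$, and the key algebraic fact driving everything is that $\tau_\sqcup$ distributes over $\cL'$-constructors (by its inductive clause) and that, by definition, $\tau_\sqcup(c''(\beta_1,\dots,\beta_n)) = \hat\tau(c'')(\tau_\sqcup(\beta_1),\dots,\tau_\sqcup(\beta_n))$, whose right-hand side equals $\tau_\sqcup(\hat\tau(c'')(\beta_1,\dots,\beta_n))$ because the top-level symbols of $\hat\tau(c'')$ are all in $\cL'$.

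For an inherited rule $r'$ of G$_{\cL'}$, I would take an instance of $r'$ in G$_\LLll$, where $\Gamma$, $\Delta$ and the $\beta_i$ are allowed to range over $F_\LLll$. Applying $\tau_\sqcup$ pointwise to the premises and to the conclusion produces a bona fide instance of $r'$ in G$_{\cL'}$, since $\tau_\sqcup$ maps $F_\LLll$ to $F_{\cL'}$ and commutes with the $\cL'$-constructor that is principal in $r'$. If a model $M' \in \cM_\LLll = \cM_{\cL'}$ satisfies every premise in the $\LLll$-sense, then $M'$ satisfies the $\tau_\sqcup$-images of those premises in the $\cL'$-sense; soundness of $r'$ in $\cL'$ delivers $M' \sat_{\cL'}$ the $\tau_\sqcup$-image of the conclusion, which is exactly $M' \sat_\LLll$ the conclusion. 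This reduces soundness of inherited rules to the assumed soundness in $\cL'$.

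For the translation rules, the argument collapses to the observation above: the premise and conclusion of L$_{c''}$ (resp.\ R$_{c''}$) differ only in replacing $\hat\tau(c'')(\beta_1,\dots,\beta_n)$ by $c''(\beta_1,\dots,\beta_n)$ on one side of the turnstile, and these two formulas have the same $\tau_\sqcup$-image. Hence for every $M' \in \cM_\LLll$, $M' \sat_\LLll c''(\beta_1,\dots,\beta_n)$ iff $M' \sat_\LLll \hat\tau(c'')(\beta_1,\dots,\beta_n)$, so the premise is satisfied exactly when the conclusion is. The same equivalence, applied to $\hat\tau(p'')$ and $p''$, handles L$_{P_{\cL''}}$ and R$_{P_{\cL''}}$.

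The only place requiring genuine care is the inherited-rule case: one must verify that a rule scheme of G$_{\cL'}$, when instantiated over $F_\LLll$ and then post-composed with $\tau_\sqcup$, yields another legitimate instance of the same scheme, including the matching of multiset contexts $\Gamma,\Delta$ and of schematic formulas $\beta_i$. Once this is phrased correctly (i.e.\ the $\cL'$-principal symbol survives because $\tau_\sqcup$ leaves $\cL'$-constructors untouched at the outer level), the rest is bookkeeping. Soundness of axioms has already been handled by the previous proposition, so combining the two gives soundness of G$_\LLll$ in $\LLll$ as an immediate corollary.
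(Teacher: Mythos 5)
Your proof is correct and follows essentially the same route as the paper's: inherited rules of G$_{\cL'}$ are handled by pushing everything through $\tau_\sqcup$ and invoking soundness in $\cL'$, while the translation rules are sound because $\tau_\sqcup(\hat\tau(c'')(\beta_1,\dots,\beta_n))=\hat\tau(c'')(\tau_\sqcup(\beta_1),\dots,\tau_\sqcup(\beta_n))=\tau_\sqcup(c''(\beta_1,\dots,\beta_n))$ makes premise and conclusion semantically equivalent in $\LLll$. Your explicit remark that one must check that the $\tau_\sqcup$-image of an $F_{\LLll}$-instance of a G$_{\cL'}$ rule scheme is again a legitimate instance of that scheme is a point the paper leaves implicit, and is a welcome addition.
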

\begin{proof} Let $M' \in \cM_{\LLll}$ and $r$ be a rule of G$_\LLll$.
There are several cases to consider: \\[1mm]
(1) $r=\frac{\Gamma_1\to \Delta_1\ \dots\ \Gamma_n\to \Delta_n}{\Gamma \to \Delta}$ is a rule of G$_{\cL'}$. Assume that 
$$M' \sat_{\LLll} \Gamma_j \to \Delta_j$$ 
for $j=1,\dots,n$. Then
$$M' \sat_{\cL'} \tau_\sqcup(\Gamma_j) \to \tau_\sqcup(\Delta_j)$$ 
for $j=1,\dots,n$ by definition of $\sat_{\LLll}$.
Thus, by soundness of $r$ in $\cL'$
we have $$M' \sat_{\cL'} \tau_\sqcup(\Gamma) \to \tau_\sqcup(\Delta).$$
 Therefore by definition of $\sat_{\LLll}$ 
$$M' \sat_{\LLll} \Gamma \to \Delta.$$ 
(2) $r$ is $(\text{L}_{P_{\cL''}})$. Suppose that
$$M' \sat_{\LLll} \hat\tau(p''),\Gamma \to \Delta.$$
Then  
$$(*) \quad M' \sat_{\LLll} \tau_\sqcup(p''),\Gamma \to \Delta.$$
Assume that 
$$M' \sat_{\LLll} p'' \ \text{ and } \ M' \sat_{\LLll} \Gamma.$$
Thus  by definition of $\sat_{\LLll}$
$$M' \sat_{\cL'} \tau_\sqcup(p'')$$
and so  by definition of $\tau_\sqcup$
$$M' \sat_{\cL'} \tau_\sqcup(\tau_\sqcup(p'')).$$
Hence by definition of $\sat_{\LLll}$
$$M' \sat_{\LLll} \tau_\sqcup(p'').$$
Therefore by $(*)$ there is $\delta \in \Delta$ such that 
$$M' \sat_{\LLll} \delta.$$
(3) $r$ is $(\text{R}_{c''})$. Suppose that
$$M' \sat_{\LLll} \Gamma \to \Delta,\hat\tau(c'')(\beta_1,\dots,\beta_n)$$
and that 
$$M' \sat_{\LLll} \Gamma.$$
There are two cases to consider. If there is $\delta \in \Delta$ such that
$$M' \sat_{\LLll} \delta$$
then the thesis follows. Otherwise 
$$M' \sat_{\LLll} \hat\tau(c'')(\beta_1,\dots,\beta_n).$$
Then by definition of $\sat_{\LLll}$
$$M' \sat_{\cL'} \tau_\sqcup(\hat\tau(c'')(\beta_1,\dots,\beta_n)).$$
Observe that 
$$\tau_\sqcup(\hat\tau(c'')(\beta_1,\dots,\beta_n)) = \hat\tau(c'')(\tau_\sqcup(\beta_1),\dots,\tau_\sqcup(\beta_n)) = \tau_\sqcup(c''(\beta_1,\dots,\beta_n))$$
by definition of $\tau_\sqcup$
and so
$$M' \sat_{\cL'} \tau_\sqcup(c''(\beta_1,\dots,\beta_n)).$$
Therefore by definition of $\sat_{\LLll}$  we have that
$$M' \sat_{\LLll} c''(\beta_1,\dots,\beta_n).$$
We omit the proof of the other cases since they follow similarly.
\end{proof}

Finally we prove that the  coexistent combined logic is sound under the proviso that the host logic is endowed with a Gentzen calculus G$_{\cL'}$ with sound axioms and rules.

\begin{prop} \em \label{prop:soundness}
The logic ${\LLll}$ is sound whenever the axioms and the rules  of G$_{\cL'}$
are sound in $\cL'$.
 \end{prop}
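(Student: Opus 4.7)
The plan is to reduce soundness of $\LLll$ to the already-established soundness of the individual axioms and rules of $\text{G}_\LLll$. That is, given Propositions~\ref{prop:soundaxioms} and~\ref{prop:soundrules}, it suffices to show that validity propagates along any derivation, so every derivable sequent is valid and in particular $\to \varphi$ is valid whenever $\der_\LLll \varphi$.

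First I would unfold the hypothesis $\der_\LLll \varphi$ into $\der_{\text{G}_\LLll}\, \to \varphi$, and fix a derivation $\Psi_1 \to \Lambda_1, \dots, \Psi_n \to \Lambda_n$ with $\Psi_1 \to \Lambda_1$ equal to $\to \varphi$. Recall from the definition of derivation that if $\Psi_j \to \Lambda_j$ is the conclusion of a rule application, then the premises occur strictly later in the sequence, at positions between $j+1$ and $n$. This structural feature suggests a reverse (descending) induction on the position $j$.

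Next I would establish the auxiliary claim that $\ent_\LLll \Psi_j \to \Lambda_j$ for every $j \in \{1,\dots,n\}$, by induction on $n-j$. For the base case, when $j = n$, the sequent cannot be the conclusion of a rule (its premises would have to appear at positions greater than $n$, which do not exist), so it must be an instance of an axiom and is therefore valid in $\LLll$ by Proposition~\ref{prop:soundaxioms}. For the inductive step, $\Psi_j \to \Lambda_j$ is either an axiom, in which case validity follows again from Proposition~\ref{prop:soundaxioms}, or the conclusion of an instance of a rule whose premises appear at later positions; by the induction hypothesis all those premises are valid in $\LLll$, and Proposition~\ref{prop:soundrules} then yields validity of $\Psi_j \to \Lambda_j$.

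Specializing the claim to $j = 1$ gives $\ent_\LLll\, \to \varphi$, and hence $\ent_\LLll \varphi$, which completes the argument. I do not anticipate any real obstacle here: the heavy lifting has already been done in Propositions~\ref{prop:soundaxioms} and~\ref{prop:soundrules}, and the remaining step is the standard structural induction over derivations, the only mild subtlety being that premises lie above (i.e., later than) the conclusion in the sequence, forcing the induction to be carried out in decreasing order of position.
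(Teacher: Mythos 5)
Your proposal is correct and follows essentially the same route as the paper: the paper also reduces the statement to the claim that every sequent derivable in $\text{G}_\LLll$ is valid in $\LLll$, proved by induction on the derivation using Propositions~\ref{prop:soundaxioms} and~\ref{prop:soundrules}, and then specializes to the sequent $\to \varphi$. Your only addition is spelling out that the induction runs in decreasing order of position (since premises occur later in the sequence), which the paper leaves implicit in the phrase ``induction on the length of a derivation.''
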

 \begin{proof} Observe that
 $$(\dag) \qquad \der_{\text{G}_\LLll} \Psi \to \Lambda \text{ implies } \ent_{\LLll} \Psi \to \Lambda$$
 which can be proved by a straightfoward induction on the length of a derivation for $\Psi \to \Lambda$
 using Proposition~\ref{prop:soundaxioms} and Proposition~\ref{prop:soundrules}.\\[1mm]
 Hence assuming $\der_{\LLll} \varphi$ then $\der_{G_\LLll} \, \to \varphi$. Thus, by (\dag), $\ent_{\LLll} \, \to \varphi$. Therefore, $\ent_{\LLll} \varphi$.
 \end{proof}


\paragraph{Completeness}\ \\[1mm]
We start by relating validity in $\LLll$ and in $\cL'$.

 \begin{prop} \em \label{prop:entcombtoline}
Let $\hat \tau$ be a constructor translation  and $\varphi \in F_{\LLll}$. Then $\ent_{\LLll} \varphi$ if and only if  $\ent_{\cL'} \tau_\sqcup(\varphi)$.
 \end{prop}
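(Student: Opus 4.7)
The plan is to unfold the definitions involved and observe that the equivalence is essentially immediate. Recall that the class of models of $\LLll$ was explicitly set to be $\cM_{\cL'}$, and the satisfaction relation was defined by $M' \sat_{\LLll} \varphi$ if and only if $M' \sat_{\cL'} \tau_\sqcup(\varphi)$ for every $M' \in \cM_{\cL'}$ and $\varphi \in F_{\LLll}$. So the two sides of the biconditional are really two ways of writing the same universal statement.

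More precisely, I would argue as follows. For the forward direction, assume $\ent_{\LLll} \varphi$ and take an arbitrary $M' \in \cM_{\cL'}$. Since $\cM_{\LLll} = \cM_{\cL'}$, we have $M' \in \cM_{\LLll}$, hence $M' \sat_{\LLll} \varphi$ by assumption. By the definition of $\sat_{\LLll}$ this gives $M' \sat_{\cL'} \tau_\sqcup(\varphi)$, so $\ent_{\cL'} \tau_\sqcup(\varphi)$. For the converse, assume $\ent_{\cL'} \tau_\sqcup(\varphi)$ and take $M' \in \cM_{\LLll} = \cM_{\cL'}$. Then $M' \sat_{\cL'} \tau_\sqcup(\varphi)$ by assumption, and the definition of $\sat_{\LLll}$ yields $M' \sat_{\LLll} \varphi$, so $\ent_{\LLll} \varphi$.

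There is no real obstacle here since the result follows directly from how $\cM_{\LLll}$ and $\sat_{\LLll}$ were defined. The role of the statement is simply to record this transfer so that, in combination with completeness of $\cL'$ (needed later to turn $\ent_{\cL'} \tau_\sqcup(\varphi)$ into $\der_{\cL'} \tau_\sqcup(\varphi)$) and Proposition~\ref{prop:backandforthderivationcombtol} (needed to turn the latter into $\der_{\LLll} \varphi$), one can lift completeness from the host logic to the coexistent combination.
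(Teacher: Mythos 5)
Your proof is correct and follows exactly the same route as the paper's: both directions unfold the definitions $\cM_{\LLll}=\cM_{\cL'}$ and $M'\sat_{\LLll}\varphi$ iff $M'\sat_{\cL'}\tau_\sqcup(\varphi)$ over an arbitrary model. Nothing is missing.
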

  \begin{proof} \ \\
  $(\to)$ Assume that $\ent_{\LLll} \varphi$ and let $M' \in \cM_{\cL'}$. Then
  $M' \in \cM_{\LLll}$ and so by hypothesis $M' \sat_\LLll \varphi$. Hence 
  $M' \sat_{\cL'} \tau_\sqcup(\varphi)$ by definition of $\sat_\LLll$. Thus $\ent_{\cL'} \tau_\sqcup(\varphi)$. \\[1mm]
  $(\from)$ Suppose that $\ent_{\cL'} \tau_\sqcup(\varphi)$. Let $M' \in \cM_\LLll$. Then
  $M' \in \cM_{\cL'}$. So $M'\sat_{\cL'} \tau_\sqcup(\varphi)$. Therefore
  $M'\sat_{\LLll} \varphi$ by definition of $\sat_\LLll$. Hence $\ent_{\LLll} \varphi$.
 \end{proof}

As a consequence it is immediate to conclude the semantic conservativity and extensivity
of the coexistent combination with respect to $\cL'$.

\begin{prop} \em \label{prop:extconservativel'}
Let $\hat \tau$ be a constructor translation and $\varphi' \in F_{\cL'}$. 
Then  
$\ent_{\LLll} \varphi'$ if and only if $\ent_{\cL'} \varphi'$. 
\end{prop}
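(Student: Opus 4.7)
The plan is to apply the preceding Proposition~\ref{prop:entcombtoline} together with the identity part of Proposition~\ref{prop:sqcuptau}, and the result essentially falls out with no further work. Explicitly, I would chain the two equivalences: first invoke Proposition~\ref{prop:entcombtoline} (which requires only that $\hat\tau$ be a constructor translation, which is exactly our hypothesis) to get $\ent_{\LLll} \varphi' \text{ iff } \ent_{\cL'} \tau_\sqcup(\varphi')$, and then use Proposition~\ref{prop:sqcuptau}, which tells us that $\tau_\sqcup$ restricted to $F_{\cL'}$ is the identity, to replace $\tau_\sqcup(\varphi')$ with $\varphi'$ on the right-hand side.

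More concretely, I would write: since $\varphi' \in F_{\cL'}$, Proposition~\ref{prop:sqcuptau} gives $\tau_\sqcup(\varphi') = \varphi'$. Hence $\ent_{\cL'} \tau_\sqcup(\varphi')$ if and only if $\ent_{\cL'} \varphi'$. Combining this with Proposition~\ref{prop:entcombtoline} applied to $\varphi'$ (viewed as a formula of $F_{\LLll}$, which is legitimate because $F_{\cL'} \subseteq F_{\LLll}$ by the definition of the combined signature), we obtain the desired biconditional $\ent_{\LLll} \varphi'$ iff $\ent_{\cL'} \varphi'$.

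There is no real obstacle here: this proposition is a direct corollary of the two previously established facts, and it plays the role of packaging the semantic side of \emph{extensivity} and \emph{conservativity} with respect to the host logic $\cL'$ in a single clean statement (paralleling Corollary~\ref{prop:derextensiveLlinha} and Corollary~\ref{cor:consllinha} on the deductive side). The only thing worth stating carefully in the write-up is the implicit use of $F_{\cL'} \subseteq F_{\LLll}$, which is immediate from the definition of $C_{\LLll}$ and $P_{\LLll}$ at the start of Section~\ref{sec:combconstrans}. Accordingly, I would keep the proof to two short sentences.
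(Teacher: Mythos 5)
Your proposal is correct and follows exactly the paper's own proof: the paper simply cites Proposition~\ref{prop:entcombtoline} together with Proposition~\ref{prop:sqcuptau} (the fact that $\tau_\sqcup$ is the identity on $F_{\cL'}$), which is precisely the two-step chain you describe. Your write-up is, if anything, slightly more explicit than the paper's one-line justification, including the observation that $F_{\cL'} \subseteq F_{\LLll}$.
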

\begin{proof}
The result follows by Proposition~\ref{prop:entcombtoline} and by Proposition~\ref{prop:sqcuptau}. 
\end{proof}

Our aim now is to show that the logic resulting from a coexistent combination is complete when the host logic  is complete and is endowed with a strictly self-contained Gentzen calculus G$_{\cL'}$.
Taking into account Proposition~\ref{prop:backandforthderivationcombtol} it is immediate to see that there is a close relationship between theoremhood in $\LLll$ and theoremhood in ${\cL'}$ modulo  $\tau_\sqcup$. 

\begin{prop} \em \label{cor:derJStotauJS}
  Let $\varphi \in F_{\LLll}$ and assume $\cL'$ is endowed with a strictly self-contained Gentzen calculus. Then $\der_{\LLll} \varphi$ if and only if
      $\der_{\cL'} \tau_\sqcup (\varphi).$
      \end{prop}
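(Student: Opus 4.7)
The plan is to derive the statement as a direct corollary of Proposition~\ref{prop:backandforthderivationcombtol}, instantiated at the empty antecedent and the singleton consequent $\{\varphi\}$. Since the notion of theoremhood was defined by $\der_{\cL} \psi \mssec \der_{\text{G}_{\cL}} \,\to \psi$, both sides of the biconditional we need to prove are, by definition, statements about the derivability of particular sequents, and the translation $\tau_\sqcup$ has already been extended to act on sequents in the previous section (acting termwise on multisets).

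First I would unfold the definition of $\der_{\LLll}$ to reduce $\der_{\LLll} \varphi$ to $\der_{\text{G}_\LLll} \,\to \varphi$, that is, to the derivability of the sequent with empty antecedent and singleton consequent $\{\varphi\}$ in the Gentzen calculus G$_\LLll$. Then I would invoke Proposition~\ref{prop:backandforthderivationcombtol} with $\Psi = \emptyset$ and $\Lambda = \{\varphi\}$; this requires the hypothesis that G$_{\cL'}$ is strictly self-contained, which is exactly the assumption of the proposition. The conclusion of that application reads
\[
\der_{\text{G}_\LLll} \,\to \varphi \mssec \der_{\text{G}_{\cL'}} \tau_\sqcup(\emptyset) \to \tau_\sqcup(\{\varphi\}),
\]
and since $\tau_\sqcup(\emptyset) = \emptyset$ and $\tau_\sqcup(\{\varphi\}) = \{\tau_\sqcup(\varphi)\}$ by the termwise extension of $\tau_\sqcup$ to multisets, the right-hand side is the sequent $\,\to \tau_\sqcup(\varphi)$. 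Finally I would unfold the definition of $\der_{\cL'}$ on this last sequent to obtain $\der_{\cL'} \tau_\sqcup(\varphi)$, closing the chain of equivalences.

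There is no real obstacle here: the substantive work has already been carried out in Proposition~\ref{prop:backandforthderivationcombtol}, and the present proposition is essentially a specialization of that result to sequents of the form $\,\to \varphi$, together with a bookkeeping check that $\tau_\sqcup$ behaves as expected on the empty antecedent and a singleton consequent. The proof is therefore a one-line appeal to Proposition~\ref{prop:backandforthderivationcombtol}.
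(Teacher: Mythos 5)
Your proposal is correct and matches the paper exactly: the paper states this proposition as an immediate consequence of Proposition~\ref{prop:backandforthderivationcombtol}, which is precisely your instantiation at $\Psi=\emptyset$ and $\Lambda=\{\varphi\}$ followed by unfolding the definition of theoremhood. The bookkeeping about $\tau_\sqcup$ acting elementwise on the (empty and singleton) multisets is the only thing to check, and you have done so.
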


  We are ready to prove the completeness of $\LLll$.
 
 \begin{prop} \em \label{prop:completeness}
 Let $\varphi \in F_{\LLll}$ and assume that $\cL'$ is complete and  has a strictly self-contained Gentzen calculus. Then $\ent_{\LLll} \varphi$ implies $\der_{\LLll} \varphi$.
 \end{prop}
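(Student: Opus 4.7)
The plan is to chain together three results already established in the excerpt. Starting from the hypothesis $\ent_{\LLll}\varphi$, the goal is to transport the validity across $\tau_\sqcup$ into the host logic $\cL'$, invoke completeness of $\cL'$ to turn semantic validity into a Gentzen derivation there, and finally transport the derivation back across $\tau_\sqcup$ into $\GLLll$.

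First I would apply Proposition~\ref{prop:entcombtoline} to the hypothesis $\ent_{\LLll}\varphi$ to obtain $\ent_{\cL'}\tau_\sqcup(\varphi)$. This step is essentially the semantic bridge: by construction $\cM_{\LLll}=\cM_{\cL'}$ and satisfaction in $\LLll$ is defined through $\tau_\sqcup$, so validity of $\varphi$ in $\LLll$ is literally the same thing as validity of $\tau_\sqcup(\varphi)$ in $\cL'$. Since $\tau_\sqcup(\varphi)\in F_{\cL'}$, completeness of $\cL'$ can be applied directly to conclude $\der_{\cL'}\tau_\sqcup(\varphi)$.

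Then I would use Proposition~\ref{cor:derJStotauJS}, which relates theoremhood in $\LLll$ and in $\cL'$ modulo $\tau_\sqcup$, and which relies precisely on the strict self-containedness assumption on $\GLl$. The ``if'' direction of that proposition converts the derivation $\der_{\cL'}\tau_\sqcup(\varphi)$ back into $\der_{\LLll}\varphi$, completing the argument.

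The substantive work has already been done in Proposition~\ref{prop:backandforthderivationcombtol} (from which Proposition~\ref{cor:derJStotauJS} is read off); there the strict self-containedness of $\GLl$ is essential, because the induction on derivations must be able to re-apply the $\cL'$-rules to the $\LLll$-formulas appearing as side formulas $\Gamma,\Delta$, and must treat rules acting on a translated constructor $\hat\tau(c'')$ by packaging them with $(\text{L}_{c''})$ or $(\text{R}_{c''})$. So no new obstacle arises here: the proof is a three-line composition of Propositions~\ref{prop:entcombtoline}, the assumed completeness of $\cL'$, and Proposition~\ref{cor:derJStotauJS}.
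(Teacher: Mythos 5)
Your proof is correct and follows exactly the same three-step route as the paper: Proposition~\ref{prop:entcombtoline} to transfer validity to $\cL'$, completeness of $\cL'$ to obtain $\der_{\cL'}\tau_\sqcup(\varphi)$, and Proposition~\ref{cor:derJStotauJS} to pull the derivation back to $\LLll$. Your remarks on where strict self-containedness is actually used are accurate as well.
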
   
 \begin{proof} 
 Suppose that $\ent_{\LLll} \varphi$. Hence $\ent_{\cL'} \tau_\sqcup(\varphi)$ by Proposition~\ref{prop:entcombtoline}. Thus $\der_{\cL'} \tau_\sqcup(\varphi)$ by completeness
 of $\cL'$.
 Therefore by Proposition~\ref{cor:derJStotauJS}, $\der_{\LLll} \varphi$.
 \end{proof} 
 
 We put the above results into work by concluding soundness and completeness of our running examples.
 
 \begin{example} \em \label{ex:PLJce}
 The coexistent combination $\PLJ$ is sound and complete, by Propositions~\ref{prop:soundness} and~\ref{prop:completeness},  since $\It$ is sound and complete and its Gentzen calculus G$_\It$ is strictly self-contained.
\curtains
\end{example}
 
\begin{example} \em \label{ex:JS4ce}
The coexistent combination $\JS$ is sound and complete, by Propositions~\ref{prop:soundness} and~\ref{prop:completeness},  since $\Sfour$ is sound and complete and its Gentzen calculus G$_\Sfour$ is strictly self-contained.
\curtains
\end{example}

\begin{example} \em \label{ex:PLJ3ce}
The coexistent combination $\CJ$ is sound and complete, by Propositions~\ref{prop:soundness} and~\ref{prop:completeness},  since $\J_3$ is sound and complete and its Gentzen calculus G$_{\J_3}$ is strictly self-contained.
\curtains
\end{example}

\section{Concluding remarks}\label{sec:concs}

We presented a general technique to combine logics that are related by a conservative translation in such a way that they coexist in the combination without losing their properties. The  translation plays an important role
 in the definition of the Gentzen calculus of the combined logic. The  Gentzen calculus for the coexistent combination is an extension of the Gentzen calculus for the host logic (the target of the conservative translation) with rules for the non common constructors of the source logic
defined taking into account the translation of those constructors. We also proposed 
a semantics for the  combined logic and proved soundness and completeness results. 
Then we proved that the  combined logic is a conservative extension of each component confirming the envisaged coexistence of the given logics and so the non-collapsing property.
Throughout the paper we provide several illustrations namely for the combination of classical  and intuitionistic logics based on the Gentzen-G\"odel conservative translation, for the combination of
intuitionistic and $\Sfour$ modal logics based on G\"odel-McKinsey-Tarski conservative translation
and for the combination of classical and Ja\'skowski's paraconsistent logics based on a conservative translation.

In future work we intend to analyze the preservation of several other proof-theoretic properties from the component logics 
to the coexistent combined logic. For instance, it seems worthwhile to study under which conditions cut elimination and Craig interpolation are preserved. Similarly we are also interested in investigating preservation of decidability and computational complexity by this combination mechanism. Finally, we would like to explore the coexistent combination of  logics not addressed herein like for instance linear logics, the family ${\bf C}_n$ of paraconsistent logics  
or the Sette logic ${\bf P}^1$ in~\cite{ott:00}, among others. 

Another interesting research topic is to define non-collapsing combinations of logics related by more complex forms of conservative translations like the ones in~\cite{dem:00,bla:06}.



\section*{Acknowledgements}
This work is funded by FCT/MECI through national funds and when applicable co-funded European Union  funds under UID/50008:~Instituto de Telecomunica\c c\~oes. The authors acknowledge the support of  
the Department of Mathematics of Instituto Superior T\'ecnico, Universidade de Lisboa.

\end{document}
